\newcommand{\be}{\begin{equation}}
\newcommand{\ee}{\end{equation}}
\newcommand{\beq}{\begin{eqnarray}}
\newcommand{\eeq}{\end{eqnarray}}
\newtheorem{prop}{Proposition}[section]
\newtheorem{theo}[prop]{Theorem}
\newtheorem{lemm}[prop]{Lemma}
\newtheorem{rema}[prop]{Remark}
\newtheorem{defi}[prop]{Definition}
\def\begeq{\begin{equation}}
\def\endeq{\end{equation}}
\def\p{\partial}
\begin{document}

\title {static flow on complete noncompact manifolds I: short-time existence and asymptotic expansions at conformal infinity}

\begin{abstract}
In this paper, we study short-time existence of static flow on complete noncompact asymptotically static manifolds from the point of view that the stationary points of the evolution equations can be interpreted as static solutions of the Einstein vacuum equations with negative cosmological constant. For a static vacuum $(M^n,g,V),$ we also compute the asymptotic expansions of $g$ and $V$ at conformal infinity.
\end{abstract}

\keywords{static flow, asymptotical static, asymptotical hyperbolic,
complete noncompact manifolds, short-time existence, asymptotic expansions }
\renewcommand{\subjclassname}{\textup{2000} Mathematics Subject Classification}
 \subjclass[2000]{Primary 53C25; Secondary 58J05}

\author{Xue Hu $^\dag$,  Yuguang Shi$^\dag$}

\address{Xue Hu, Key Laboratory of Pure and Applied mathematics, School of Mathematics Science, Peking University,
Beijing, 100871, P.R. China.} \email{huxue@math.pku.edu.cn}

\address{Yuguang Shi, Key Laboratory of Pure and Applied mathematics, School of Mathematics Science, Peking University; Beijing International Center for Mathematical Research, Beijing, 100871, P.R. China.} \email{ygshi@math.pku.edu.cn}

\thanks{$^\dag$ Research partially supported by NSF grant of China 10725101 and 10990013.}

\date{2011}
\maketitle

\markboth{Xue Hu,  Yuguang Shi}{}

\section{Introduction}
Geometric flow equations play an important role in geometric analysis nowadays. For example, the Ricci flow has contributed to the thorough resolution of the Poincar$\acute{e}$ and Thurston conjectures \cite{P1}, \cite{P2}, and the diffeomorphic $\frac{1}{4}$-pinched sphere theorem \cite{BS}, \cite{BS1}, while the inverse mean curvature flow has yielded a proof of the Riemannian Penrose conjecture \cite{HI}, \cite{HI1}. From the latter, naturally, the question of whether these powerful geometric flow equations can be used in physics has become common concern both in differential geometry and physics.

The class of static spacetimes is the most simple and interesting object in general relativity. List has proposed a geometric flow from the point of view that the stationary points of the evolution equations can be interpreted as static solutions of the Einstein vacuum equations \cite{BL}.

In this note, we mainly generalize List's results to the case with negative cosmological constant. In this case, the spacetime $(X^{n+1},\tilde{h})$ satisfies the equation
\begin{equation}\label{negeinstein}
G+\Lambda \tilde{h}=8\pi T
\end{equation}
where $G=Ric(\tilde{h})-\frac{1}{2}R(\tilde{h}),$ is the Einstein tensor of $(X^{n+1},\tilde{h}),$ $T$ is the energy-momentum tensor, and $\Lambda$ is negative cosmological constant. To find solutions to (\ref{negeinstein}) is a central issue in physics but, in general, the equation is hard to solve, thus special cases are considered. For $T=0,$ we call a solution to (\ref{negeinstein}) a vacuum spacetime. Without loss of generality, we normalize the constant $\Lambda$ for a vacuum spacetime such that \begin{equation}\label{einstein}
Ric(\tilde{h})=-n\tilde{h}.
\end{equation}
Much like List did, we consider spacetimes which have some symmetry. In the following, we assume $n\geq3.$
\begin{defi}\label{static}
A Lorentzian manifold $(X^{n+1}, \tilde{h})$ is said to be stationary, if there exists a 1-parameter group of isometries with timelike orbits. If in addition, there exists a hypersurface $M^n$ which is orthogonal to these orbits and therefore spacelike, $(X^{n+1}, \tilde{h})$ is said to be static.
\end{defi}
\begin{rema}
For a 1-parameter group of isometries with timelike orbits, equivalent is the existence of a timelike Killing vector field $\xi.$
\end{rema}

\begin{rema}
Static spacetime metric splits as a warped product of $\mathbb{R}$ and a Riemannian manifold, i.e., $$X^{n+1}=\mathbb{R}\times M^n, \tilde{h}=-V^{2}dt^{2}+g,$$ where $(M^{n},g)$ is a Riemannian manifold and $V:=\sqrt{-\tilde{h}(\xi,\xi)}$ is a positive function on $M^n.$
\end{rema}

If we apply the Einstein vacuum equation (\ref{einstein}) to the static spacetime metric $\tilde{h}$, we obtain the equations
\begin{equation}\label{1}
Ric(g)+ng=V^{-1}\nabla_{g}^{2}V,
\end{equation}
and
\begin{equation}\label{2}
\Delta_{g}V=nV.
\end{equation}
\begin{defi}
A Riemannian manifold $(M^n,g)$ with a positive function $V$ on $M^n$ satisfying (\ref{1}) and (\ref{2}) is called static Einstein vacuum, denoted by $(M^n,g,V).$
\end{defi}

Examples of static Einstein vacuum are the Anti-de Sitter and Schwarzschild-AdS metrics. For more details, please refer to \cite{CS}, \cite{Q} and \cite{Wa} and the references therein.

We introduce the following static flow for $g$ and $V$ on $M^n:$
\begin{equation}\label{originalequation}
\left\{
\begin{array}{ll}
    \frac{\partial}{\partial t} g= -2Ric(g)-2ng+2V^{-1}\nabla_{g}^{2}V  \\
    \frac{\partial}{\partial t} V= \Delta_{g} V-nV.\\
\end{array}
\right.
\end{equation}

In \cite{BL}, List considered static Einstein vacuum with vanishing cosmological constant, that is, a triple $(M^n,g,V)$ satisfies
\begin{equation}
Ric(g)=V^{-1}\nabla_{g}^{2}V\nonumber
\end{equation}
and
\begin{equation}
\Delta_{g}V=0.\nonumber
\end{equation}
He took $u=\ln V$ and took conformal transformation $\tilde{g}=e^{\frac{2}{n-2}u}\cdot g,$ in this way, the static Einstein vacuum equation $\Delta_{g}V=0$ became $\Delta_{\tilde{g}}u=0,$ while $Ric(g)=V^{-1}\nabla_{g}^{2}V$ became $Ric(\tilde{g})=\frac{n-1}{n-2}\nabla_{\tilde{g}} u\otimes\nabla_{\tilde{g}} u,$ involving second derivatives of $\tilde{g}$ but merely first derivatives of $u.$  Thus List proposed the extended Ricci flow system
\begin{equation}\label{list}
\left\{
\begin{array}{ll}
    \frac{\partial}{\partial t} g= -2Ric(g)+4du\otimes du,  \\
    \frac{\partial}{\partial t} u= \Delta_{g} u.\\
\end{array}
\right.
\end{equation}
Using DeTurck's trick, this system could be changed to be strictly quasi-linear parabolic system and then List applied standard parabolic theory as well as Shi's ideas \cite{Shi} to get the short-time existence result.

We point out here that List's system of flow equations is in fact the pullback by a certain diffeomorphism of a class of Ricci flows in one higher dimension(Please see \cite{AW}). Comparing with List's flow, our static flow (\ref{originalequation}) comes directly from the one-higher dimensional Ricci flow, more specifically, the Ricci flow of a warped product. Notice that for static Einstein vacuum $(M^n,g,V),$ the Riemannian metric $h=V^{2}d\theta^{2}+g$ can be viewed as an Einstein metric on $\mathbb{S}^{1}\times M^n.$ Due to this fact, Anderson, Chru$\acute{s}$ciel and Delay could use the result for conformally compact Einstein manifolds to study non-trivial, static, geodesically complete, vacuum space-times with a negative cosmological constant. Please see \cite{ACD} and \cite{ACD1}. Meanwhile, the Ricci tensor of $h$ is given by
$$
Ric(h)=
\left(
  \begin{array}{cc}
    -V^{-1}\Delta_{g}V & 0 \\
    0 & Ric(g)-V^{-1}\nabla_{g}^{2}V \\
  \end{array}
\right).
$$
Thus we can see that the normalized Ricci flow equation
\begin{equation}\label{nrf}
\frac{\partial}{\partial t}h=-2Ric(h)-2nh\nonumber
\end{equation}
induces the static flow (\ref{originalequation}).

However, taking into account factor such as the use of the flow, for negative cosmological constant case, we don't employ the means List used. List's flow (\ref{list}) preserves two classes of asymptotically flat solutions assuming a uniform curvature bound (see \cite{BL} Theorem 9.5 and Theorem 9.7). Similarly, we hope to propose a kind of flow which preserves some classes of asymptotically hyperbolic(see definition\ref{AH}) solutions under some circumstances. In the asymptotically flat category, though List's flow is an extended Ricci flow, we can see that the conformal transformation doesn't bring any trouble in the preservation of asymptotically flat structure. But even for Anti-de Sitter spacetime, $$(\mathbb{R}\times \mathbb{H}^{n}, -\cosh^{2}rdt^{2}+dr^{2}+\sinh^{2}rdw^{2}),$$ that is, $$(M^n,g,V)=(\mathbb{H}^{n},dr^{2}+\sinh^{2}rdw^{2},\cosh r),$$ if we take the same conformal transformation, we find that $\tilde{g}=e^{\frac{2}{n-2}u}\cdot g$ where $u=\ln V$ induces a new metric $\tilde{g}=(\cosh r)^{\frac{2}{n-2}}g_{\mathbb{H}^n},$ which is obviously not asymptotically hyperbolic. In other words, List's conformal transformation doesn't preserve asymptotically hyperbolic structure.

Now we give some basic notions in conformally compact geometry that we may use later.

Suppose that $X^{n+1}$ is a smooth manifold with boundary $\partial
X^{n+1}= Y^n$. A defining function $\tau$ of the boundary $Y^n$ in
$X^{n+1}$ is a smooth function on $X^{n+1}$ such that
\begin{enumerate}
  \item $\tau > 0$ in $X^{n+1}$;
  \item $\tau = 0$ on $Y^n$;
  \item $d\tau \neq 0$ on $Y^n$.
\end{enumerate}
A complete Riemannian metric $h$ on $X^{n+1}$ is
conformally compact if $(\bar{X}^{n+1}, \bar{h}=\tau^2h)$ is a compact Riemannian manifold for a smooth
defining function $\tau$ of the boundary $Y^n$ in $X^{n+1}.$ If $\bar{h}$ is $C^{k, \mu},$ $h$ is said to be
conformally compact of regularity $C^{k, \mu}.$
The restriction of $\bar{h}$ to $TY^n$ rescales upon changing $\tau,$ so defines invariantly a conformal class of metrics on $Y^n.$ $(Y^{n}, [\bar{h}\mid_{TY^n}])$ is called the conformal infinity of the conformally compact manifold $(X^{n+1}, h).$
\begin{defi}\label{AH}
A complete noncompact Riemannian manifold $(X^{n+1}, h)$ is called asymptotically hyperbolic($AH$) of order $a$ if
\begin{equation}
||Rm-\mathbf{K}||_{h}\leq C e^{-a \rho},\nonumber
\end{equation}
where $Rm$ denotes the Riemann curvature tensor of the metric $h$
and $\mathbf{K}$ the constant curvature tensor of $-1$, i.e.,
$\mathbf{K}_{ijkl}=-(h_{ik}h_{jl}-h_{il}h_{jk})$; $\rho$ is the
distance function to a fixed point in $X^{n+1}$ with respect to $h$;
and $C$ is a positive constant independent of $\rho$.
\end{defi}
\begin{rema}
$AH$ order $a=2$ is important and interesting in mathematics and physics. On one hand, roughly speaking, there is rigidity when $a>2$ (see \cite{HQS} and the references therein); On the other hand, $C^2$ conformally compact Einstein manifolds are
$AH$ of order $2.$ In order to avoid the complexity of the end structure of a hyperbolic manifold, we need the concept of an essential set here. Please see \cite{HQS} Definition 1.1 for the definition of an essential set. In Gicquaud's PhD thesis, he proved that if a complete noncompact manifold is $C^2$ conformally compact then it contains essential sets(\cite{GR} Lemma 2.5.11 and
Corollary 2.5.12).
\end{rema}
The recent work of \cite{B} and \cite{QSW} investigated the behavior of normalized Ricci flow on $AH$ manifolds, while it took care of conformal infinities. By virtue of the ideas in these two papers, we give the following
\begin{defi}\label{AS}
Suppose $(M^n,g)$ is a smooth complete noncompact Riemannian manifold with an essential set and $V$ is a positive function on $M^n.$ Then a triple $(M^{n},g,V)$ is called asymptotically static(AS) of order $a\geq 2$ if $(\mathbb{S}^{1}\times M^n, h=V^{2}d\theta^{2}+g)$ is $AH$ of order $2,$ and
\begin{equation}\label{hah}
||\nabla_{h}Ric(h)||_{h}(\theta, x)\leq C e^{-a\rho(\theta,x)}.\nonumber
\end{equation}
where $\rho$ is the distance function to a fixed point in $\mathbb{S}^{1}\times M^n$ with respect to $h$;
and $C$ is a positive constant independent of $\rho.$
\end{defi}

If we compute the curvature tensor of the warped product in terms of Gauss-Codazzi equations, we have the following
\begin{prop}\label{equivalent}
If $(M^{n},g,V)$ is AS of order $a\geq2$, then
\begin{enumerate}
\item $(M^n,g)$ is AH of order $2;$
\item $||Ric(g)+ng-V^{-1}\nabla_{g}^{2}V||_{g}\leq Ce^{-2r};$
\item $||\nabla_{g}(V^{-1}\Delta_{g}V)||_{g}\leq Ce^{-ar}$ and $||\nabla_{g}(Ric(g)-V^{-1}\nabla_{g}^{2}V)||_{g}\leq Ce^{-ar};$
\end{enumerate}
where $r$ is the distance function to a fixed point in $M^n$ with respect to $g.$
\end{prop}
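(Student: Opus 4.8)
The plan is to read off each conclusion from the warped product structure of $h=V^{2}d\theta^{2}+g$, using the fact that the slices $M^{n}\times\{\theta\}$ are totally geodesic hypersurfaces with unit normal $e_{0}=V^{-1}\partial_{\theta}$. First I would record the consequences of the Gauss--Codazzi equations (equivalently, O'Neill's warped product formulas). Since the second fundamental form of each slice vanishes, the Gauss equation gives $Rm_{h}(X,Y,Z,W)=Rm_{g}(X,Y,Z,W)$ for $X,Y,Z,W$ tangent to $M^{n}$, the Codazzi equation forces the components with exactly one normal entry to vanish, and a direct computation (or the Riccati equation along $e_{0}$) gives the mixed radial curvature $Rm_{h}(X,e_{0},Y,e_{0})=-V^{-1}\nabla_{g}^{2}V(X,Y)$; the purely normal block vanishes because the fibre is one dimensional. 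I would also record the connection identities $\bar\nabla_{X}Y=\nabla^{g}_{X}Y$ and, crucially, $\bar\nabla_{X}e_{0}=0$, so that $e_{0}$ is parallel along horizontal directions. Finally, comparing lengths of curves shows that for a base point on the slice $\{\theta_{0}\}\times M^{n}$ one has $\rho(\theta_{0},x)=r(x)$: a horizontal lift of a $g$-geodesic has the same $h$-length, while projecting any path onto $M^{n}$ cannot increase its length. Hence every factor $e^{-a\rho}$ may be replaced by $e^{-ar}$, and the $h$-norm of a tensor with all entries horizontal equals its $g$-norm.

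Conclusions (1) and (2) then follow purely from the hypothesis that $h$ is $AH$ of order $2$, i.e. $\|Rm_{h}-\mathbf{K}_{h}\|_{h}\le Ce^{-2\rho}$. The all-horizontal block of this inequality reads $\|Rm_{g}-\mathbf{K}_{g}\|_{g}\le Ce^{-2r}$, which is exactly statement (1). Tracing this estimate gives $Ric(g)=-(n-1)g+O(e^{-2r})$, so $Ric(g)+ng=g+O(e^{-2r})$; meanwhile the mixed block $Rm_{h}(X,e_{0},Y,e_{0})-\mathbf{K}_{h}(X,e_{0},Y,e_{0})=-V^{-1}\nabla_{g}^{2}V(X,Y)+g(X,Y)$ yields $V^{-1}\nabla_{g}^{2}V=g+O(e^{-2r})$. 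Subtracting the two relations proves $\|Ric(g)+ng-V^{-1}\nabla_{g}^{2}V\|_{g}\le Ce^{-2r}$, which is statement (2).

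For (3) I would differentiate the block form $Ric(h)=\mathrm{diag}\big(-V^{-1}\Delta_{g}V,\ Ric(g)-V^{-1}\nabla_{g}^{2}V\big)$, written in the orthonormal frame $e_{0},e_{1},\dots,e_{n}$, and extract two components of $\nabla_{h}Ric(h)$. Because $e_{0}$ is parallel along horizontal directions and the slices are totally geodesic, the horizontal covariant derivative does not mix the normal and tangential blocks: evaluating on $(X;e_{0},e_{0})$ gives $(\nabla_{h}Ric(h))(X;e_{0},e_{0})=-X(V^{-1}\Delta_{g}V)$, while evaluating on three horizontal arguments gives $(\nabla_{h}Ric(h))(X;Y,W)=\big(\nabla_{g}(Ric(g)-V^{-1}\nabla_{g}^{2}V)\big)(X;Y,W)$. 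Since each of these is a sub-block of $\nabla_{h}Ric(h)$, its $h$-norm is bounded by $\|\nabla_{h}Ric(h)\|_{h}\le Ce^{-a\rho}=Ce^{-ar}$, and this gives both estimates in (3) at once.

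The identities in the first paragraph are the standard warped product formulas, so the only genuine work is the careful bookkeeping in the last step: one must verify that the horizontal part of the Levi-Civita connection of $h$ preserves the splitting into normal and tangential blocks — precisely the vanishing $\bar\nabla_{X}e_{0}=0$ and the total geodesy $\langle\bar\nabla_{X}Y,e_{0}\rangle=0$ — so that no curvature terms or first derivatives of $V$ leak into $(\nabla_{h}Ric(h))(X;e_{0},e_{0})$ or $(\nabla_{h}Ric(h))(X;Y,W)$. This, together with checking that the two distance functions agree and that the tensor norms match across the two metrics, is where I expect the main (though essentially routine) difficulty to lie.
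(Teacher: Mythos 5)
Your argument is correct and follows essentially the same route as the paper: both exploit that the slices of $h=V^{2}d\theta^{2}+g$ are totally geodesic with $\bar\nabla_{X}e_{0}=0$, identify the mixed curvature block with $-V^{-1}\nabla_{g}^{2}V$, and read off (1)--(3) from the block decompositions of $Rm(h)-\mathbf{K}$ and $\nabla_{h}Ric(h)$ together with $\rho\geq r$. The only (immaterial) differences are that you obtain (2) by tracing the horizontal Gauss block and subtracting the mixed-block estimate rather than writing $Ric(g)=Ric(h)|_{TM}-Rm(h)(e_{0},\cdot,e_{0},\cdot)$ as the paper does, and your sign for $(\nabla_{h}Ric(h))(X;e_{0},e_{0})$ is the correct one (the paper drops a minus sign there, which does not affect the norm estimate).
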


Now, we state the main results:
\begin{theo}\label{mainresult}
Suppose $(M^n,g_{0},V_{0})$ is asymptotically static of order $a\geq2$ and $\|e^{-r}V_{0}\|_{C^{2+\alpha}}\leq C$ where $\alpha\in(0,1).$  Then for any $\epsilon>0,$ there exists $T_{0}=T_{0}(n,C,\epsilon)$ such that the static flow (\ref{originalequation})
with initial data $g(0)=g_{0}$ and $V(0)=V_{0}$ has a smooth solution $(g,V)(x,t)$ on $M\times[0,T_{0}]$ satisfying $$\parallel g-g_{0}\parallel_{g_{0}}(x)+\parallel \nabla_{g_{0}}g\parallel_{g_{0}}(x)\leq \epsilon e^{-2r(x)}$$
for all $(x,t)\in M\times[0,T_{0}].$
\end{theo}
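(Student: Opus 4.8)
My plan is to follow the classical DeTurck--Shi strategy adapted to the asymptotically hyperbolic setting; the decisive geometric input is that the initial velocity of (\ref{originalequation}) is precisely the static defect, which Proposition \ref{equivalent}(2) forces to decay like $e^{-2r}$. First I would break the diffeomorphism degeneracy of the $g$--equation by DeTurck's trick: with $g_0$ as background, set $W^k=g^{ij}\big(\Gamma_{ij}^k(g)-\Gamma_{ij}^k(g_0)\big)$ and add $\mathcal{L}_W g$ to the $g$--equation, so that it acquires principal part $g^{kl}\nabla_k^{g_0}\nabla_l^{g_0}g_{ij}$ while $\partial_t V=\Delta_g V-nV$ is already of heat type. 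The coupled $(g,V)$ system then has block-triangular principal symbol with diagonal $|\xi|_g^2\,\mathrm{Id}$ (the second-order term $2V^{-1}\nabla_g^2V$ sits in an off-diagonal block), hence is Petrowsky parabolic; this is consistent with its descent from the DeTurck version of the normalized Ricci flow of $h=V^2d\theta^2+g$. Because the AdS-type lapse grows like $e^{r}$, and given the hypothesis $\|e^{-r}V_0\|_{C^{2+\alpha}}\le C$, I would work throughout with the unknowns $g-g_0$ and $e^{-r}V$ in weighted H\"older spaces carrying the weight $e^{-2r}$.

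Since $M$ is complete and noncompact, I would construct the solution by Shi's exhaustion method. Choose a smooth compact exhaustion $\Omega_1\Subset\Omega_2\Subset\cdots$ with $\bigcup_i\Omega_i=M$ and solve, on each $\Omega_i\times[0,T]$, the Dirichlet initial--boundary value problem for the DeTurck system with data $(g_0,V_0)$; standard quasilinear parabolic theory produces a smooth solution $(g_i,V_i)$ on a short interval. Everything then reduces to a priori estimates that are uniform in $i$, after which a diagonal Arzel\`a--Ascoli limit yields a solution on $M\times[0,T_0]$.

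The heart of the matter, and the step I expect to be the main obstacle, is a weighted a priori estimate preserving the $e^{-2r}$ decay. Writing $u=g-g_0$, its evolution is parabolic with inhomogeneous term equal at $t=0$ to $-2\big(Ric(g_0)+ng_0-V_0^{-1}\nabla_{g_0}^2 V_0\big)$, of $g_0$--norm $\le Ce^{-2r}$ by Proposition \ref{equivalent}(2), while $u|_{t=0}=0$. I would run the maximum principle on $F=e^{2r}|u|_{g_0}$: conjugating $\Delta_{g_0}$ by $e^{2r}$ costs only a bounded drift and a bounded zeroth-order term, since on an AH manifold $|\nabla r|\equiv 1$ and $\Delta_{g_0}r$ is bounded, so $F$ obeys a uniformly parabolic inequality $\partial_t F\le \Delta_{g_0}F-2\nabla r\cdot\nabla F+cF+C_0$ with bounded $c$ and with bounded forcing $C_0$ coming from the $e^{2r}$-weighted static defect. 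As $F(\cdot,0)=0$ this gives $F(\cdot,t)\le C_0 t\,e^{ct}$, and a small $T_0=T_0(n,C,\epsilon)$ makes $F\le\epsilon$, i.e. $|g-g_0|_{g_0}\le\epsilon e^{-2r}$; the $\nabla_{g_0}g=\nabla_{g_0}u$ term is controlled by the same argument applied to a Shi-type interior gradient quantity. The difficulty is that three things must hold at once: the estimate must be uniform in $i$; the growing lapse $V\sim e^{r}$ enters the $g$--equation through $V^{-1}\nabla_g^2V$, so closing the coupled system requires the $e^{-r}$-normalized quantities (whose forcing $e^{-r}(\Delta_{g_0}V_0-nV_0)$ is again $O(e^{-2r})$ by Proposition \ref{equivalent}(3)) to stay bounded in the weighted spaces on all of $[0,T_0]$; and the maximum principle must be justified where $e^{2r}\to\infty$ at conformal infinity, which I would handle via the Dirichlet data on $\partial\Omega_i$ together with the interior estimates, or by an Omori--Yau-type argument once the unweighted differences are shown to decay.

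Finally I would undo the DeTurck modification by integrating $\partial_t\phi_t=-W(\phi_t,t)$ with $\phi_0=\mathrm{id}$. Because $W$ is built from $\nabla_{g_0}u$ it decays like $e^{-2r}$, so $\phi_t$ exists on all of $[0,T_0]$, stays uniformly close to the identity, and preserves completeness; setting $g=\phi_t^{\,*}\bar g$ recovers a genuine solution of (\ref{originalequation}), and the smallness of $\phi_t-\mathrm{id}$ transfers the weighted bounds from $\bar g$ to $g$. Smoothness in space and time then follows by parabolic Schauder bootstrapping.
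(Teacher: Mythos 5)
Your proposal is correct in its overall architecture and identifies the right geometric inputs (DeTurck's trick with background $g_0$, the $e^{-2r}$ decay of the static defect from Proposition \ref{equivalent}, a weighted maximum principle, and undoing the gauge at the end), but it takes a genuinely different functional-analytic route from the paper. You solve the full quasilinear Dirichlet problem on each exhausting domain $\Omega_i$ and pass to the limit after uniform a priori estimates, in the classical Shi style; the paper instead never solves a nonlinear problem directly. It fixes the weighted H\"older ball $\mathbf{B}_{\alpha,\epsilon}$ of perturbations $s=g-\hat g$ with $\|e^{2r}s\|_{C^{\alpha,\alpha/2}}+\|e^{2r}\hat\nabla s\|_{C^{\alpha,\alpha/2}}\leq\epsilon$, and for each such $s$ solves two \emph{linear} problems in succession --- first the $V$-equation with coefficients from $g=\hat g+s$ (Lemma \ref{uniquev}, yielding $e^{r}(V-\hat V)\in C^{2+\alpha,1+\alpha/2}$), then the $G$-equation with that $V$ inserted (Lemma \ref{finallemma}) --- and closes the argument with the Schauder fixed point theorem, using an Ecker--Huisken/Karp--Li integral maximum principle to get $\|G-\hat g\|\leq L_2te^{-2r}$ and uniqueness of the linearized solution. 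Your route buys a more self-contained construction (no compactness-of-the-operator verification), but at the cost of needing quasilinear existence theory for the coupled system with an existence time uniform in $i$, which is exactly what the linear-plus-fixed-point scheme avoids. One step in your sketch is thinner than it should be: the smallness $\|\nabla_{g_0}(g-g_0)\|\leq\epsilon e^{-2r}$ does not follow from a Shi-type interior gradient estimate alone (those give boundedness, or bounds degenerating like $t^{-1/2}$, not smallness); the paper obtains it by interpolating between the $C^0$ bound $L_2te^{-2r}$ and the uniform weighted $C^{2+\alpha}$ Schauder bound $L_1e^{-2r}$, choosing $\delta=\sqrt{T}$, and you would need this (or an equivalent) to make that part of the estimate close.
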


\begin{rema}
Since the static flow for $(M^n,g,V)$ comes from the normalized Ricci flow for $(\mathbb{S}^{1}\times M^n, h=V^{2}d\theta^{2}+g),$ we take advantage of the results in \cite{CZ} to obtain the uniqueness of the static flow for curvature bounded solutions. In Theorem \ref{mainresult}, if the initial metric is $C^{2+\alpha},$ the solution $(g,V)$ to the static flow we get in Theorem \ref{mainresult} is still $C^{2+\alpha},$ then the curvature for $h$ is bounded. Hence we get the uniqueness of the static flow.
\end{rema}
Given a conformally compact, asymptotically hyerbolic manifold $(X^{n+1},h)$ and a representative $\hat{h}$ in $[\hat{h}]$ on the conformal infinity $Y^n,$ there is a unique determined defining function $\tau$ such that, in a neighborhood of the boundary $[0, \delta)\times Y^{n}\subset \bar{X}^{n+1},$ $h$ has the form
\begin{equation}
h=\tau^{-2}(d\tau^2+h_{\tau}),\nonumber
\end{equation}
where $h_{\tau}$ is a $1-$parameter family of metrics on $Y^n.$ We call this $\tau$ the special defining function associated with $\hat{h}.$
In \cite{G}, Graham showed the asymptotic expansion of a conformally compact Einstein manifold with respect to the special defining function, which is associated with a conformal infinity. Naturally, we come to the question that whether we can do the same thing for static Einstein vacuum. But this will arose another question that how $V$ behaves near the infinity. In \cite{Wa}, X.D. Wang has proved a uniqueness theorem of AdS spacetime, there he required $(M^n,g)$ to be conformally compact and $V^{-1}$ to be a defining function for $(M^n,g).$ Inspired by this idea, we require that $V$ be with the growth of the inverse of a special defining function near conformal infinity, then we have
\begin{theo}\label{expansion}
Suppose that $(M^n, g, V)$ is a static Einstein vacuum, that $(M^n, g)$ is an asymptotically hyperbolic manifold with the conformal infinity $(N^{n-1}, [\hat{g}])$ and that $\tau$ is the special defining function associated with a metric $\hat{g}\in [\hat{g}].$ Assume also that $V$ is with the growth of $\frac{1}{\tau}$ near conformal infinity. If $g=\tau^{-2}(d\tau^2+g_{\tau}),$ then we have
\begin{equation}
g_{\tau}=\hat{g}+g^{(2)}\tau^{2}+(even~powers~of~\tau)+g^{(2l)}\tau^{2l}+\cdots\nonumber
\end{equation}
\begin{equation}
V=\frac{1}{\tau}+V^{(1)}\tau+(odd~powers~of~\tau)+V^{(2l-1)}\tau^{2l-1}+\cdots\nonumber
\end{equation}
where $g^{(2l)}$ and $V^{(2l-1)}$ are uniquely determined by $\hat{g}$ for $2l\leq n-1.$

\end{theo}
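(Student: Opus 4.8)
The plan is to lift the problem to the $(n+1)$-dimensional Einstein metric that the paper has already singled out, and to deduce both expansions from the Fefferman--Graham expansion of that metric. Recall from the block form of $Ric(h)$ displayed in the introduction that a triple $(M^n,g,V)$ is a static Einstein vacuum if and only if the warped product $h=V^2 d\theta^2+g$ on $S^1\times M^n$ is Einstein with $Ric(h)=-nh$: read as a $(1,1)$-tensor, the spatial block gives $Ric(g)+ng=V^{-1}\nabla_g^2V$ and the $\theta\theta$-block gives $\Delta_gV=nV$. Thus $(S^1\times M^n,h)$ is a conformally compact Einstein manifold of dimension $n+1$, and Graham's result \cite{G} applies to it once we identify its conformal infinity and its special defining function.

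First I would show that the special defining function $\tau$ for $g$ associated with $\hat g$ is simultaneously the special defining function for $h$. Writing $g=\tau^{-2}(d\tau^2+g_\tau)$ and using the hypothesis that $\tau V\to1$ (the precise reading of ``$V$ has the growth of $1/\tau$''), I compute
\begin{equation}
\bar h:=\tau^2 h=d\tau^2+\big(\tau^2V^2\,d\theta^2+g_\tau\big)=:d\tau^2+h_\tau.\nonumber
\end{equation}
Because $V=V(\tau,x)$ is independent of $\theta$, there are no $d\tau\,d\theta$ or $d\tau\,dx^i$ cross terms, so the $\tau$-block of $\bar h$ decouples and $|d\tau|^2_{\bar h}=\bar h^{\tau\tau}=1$. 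Hence $\tau$ is special for $h$, the associated family of boundary metrics is $h_\tau=\tau^2V^2\,d\theta^2+g_\tau$, and its boundary value is $\hat h=d\theta^2+\hat g$ (using $\tau^2V^2\to1$ and $g_\tau\to\hat g$). In particular the conformal infinity of $h$ is $(S^1\times N^{n-1},[d\theta^2+\hat g])$, which is determined by $\hat g$.

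Next I would invoke Graham's expansion \cite{G} for the $n$-dimensional boundary, namely
\begin{equation}
h_\tau=\hat h+\sum_{1\le j,\;2j\le n-1}h^{(2j)}\tau^{2j}+O(\tau^n),\nonumber
\end{equation}
where only even powers occur below order $n$ and each $h^{(2j)}$ with $2j\le n-1$ is determined by $\hat h$, hence by $\hat g$ (any logarithmic term for even $n$ appears only at order $\tau^n$, outside the claimed range). Since the actual tensor $h_\tau=\tau^2V^2\,d\theta^2+g_\tau$ is block diagonal and $\theta$-independent, uniqueness of the expansion forces every coefficient $h^{(2j)}$ to have the same block structure, and reading off the two blocks gives the result. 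The $N$-block yields $g_\tau=\hat g+\sum_j g^{(2j)}\tau^{2j}$ with $g^{(2j)}=h^{(2j)}|_{TN}$, while the $\theta\theta$-block yields $\tau^2V^2=1+\sum_j c_{2j}\tau^{2j}$ with $c_{2j}=h^{(2j)}_{\theta\theta}$. Taking the square root of the latter (its constant term is $1$) produces a power series in $\tau^2$, so
\begin{equation}
V=\frac1\tau\Big(1+\sum_{j\ge1}c_{2j}\tau^{2j}\Big)^{1/2}=\frac1\tau+V^{(1)}\tau+V^{(3)}\tau^3+\cdots,\nonumber
\end{equation}
i.e.\ only odd powers of $\tau$ survive after the leading $\tau^{-1}$. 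As the $c_{2j}$ and $g^{(2j)}$ are determined by $\hat g$ for $2j\le n-1$, so are the $V^{(2l-1)}$ and $g^{(2l)}$ for $2l\le n-1$.

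The step I expect to be most delicate is the passage from the hypothesis to a genuinely admissible conformal compactification of $h$: I must pin down the exact sense in which ``$V$ has the growth of $1/\tau$'' (I read it as $\tau V$ extending to the boundary with value $1$ at the regularity Graham's theorem requires) so that $\bar h$ is an honest conformally compact Einstein metric and $\tau$ is truly its special defining function. A self-contained alternative avoiding \cite{G} is to substitute $g=\tau^{-2}(d\tau^2+g_\tau)$ and $V=\tau^{-1}W$ directly into (\ref{1}) and (\ref{2}), expand $Ric(g)$, $\nabla_g^2V$ and $\Delta_gV$ in $\tau$-derivatives of $g_\tau$ and $W$, and solve the resulting recursion order by order; there the main obstacle is showing that the linear operator determining the $\tau^{2l}$-coefficient is invertible precisely for $2l\le n-1$ (it degenerates at the critical order $n$), together with a parity argument killing the odd coefficients of $g_\tau$ and the nonleading even coefficients of $W$.
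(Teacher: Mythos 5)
Your proposal is correct and follows essentially the same route as the paper: pass to the warped-product Einstein metric $h=V^{2}d\theta^{2}+g$ on $\mathbb{S}^{1}\times M^{n}$, verify that $\tau$ is its special defining function with boundary metric $d\theta^{2}+\hat{g}$ (the paper's Lemma 3.2), and read off the two blocks of Graham's even, $\hat{h}$-determined expansion of $h_{\tau}$. The only cosmetic differences are that you obtain the parity of $V$ by taking the square root of the $\theta\theta$-component where the paper runs an induction on the identity $\sum_{a+b=2i+1}u^{(a)}u^{(b)}=0$ for $u=\tau V$, and that your ``self-contained alternative'' recursion is precisely the paper's Lemma 3.1, which it proves first in order to justify the existence of the expansions of $g_{\tau}$ and $u$ to order $n-1$.
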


This paper would be the first one among a series of papers, which we are about to work on. In our sequel paper, we will pay more attention to the conformal category. We will use Theorem\ref{expansion} and the methods in \cite{QSW} to prove existence result of the static metric, and discuss the behavior of the conformal infinity in this flow and other related problems. This paper is organized as follows: In section 2 we prove the short-time existence of the static flow. In section 3, we compute the asymptotic expansions at conformal infinity.\\

{\bf Acknowledgements}
The authors are grateful to Professor Xiaodong Wang, Professor Jie Qing and Professor Romain Gicquaud for their interests in this work and many enlightening discussions. The authors are also indebted to Professor Romain Gicquaud to show us the references \cite{ACD} and \cite{ACD1}, as well as correct some mistakes. The authors would like to express our gratitude for Professor Eric Woolgar to point out a fact that we didn't know and show the citation \cite{AW}.

\section {short-time existence}
In this section, we mainly prove the short-time existence of our static flow. We follow the ideas of Shi\cite{Shi} and List\cite{BL} to modify the static flow and derive some formulas from the altered flow equations, and then use a maximum principle by Ecker and Huisken to get our basic estimates. Next, we construct a Banach space and an operator, together with standard parabolic theory for linear equation, we can check the fulfillment of the Schauder fixed point theorem, which promises the short-time existence of static flow.

We begin with the proof of Proposition \ref{equivalent} and then we can use it in the following estimates.
\begin{proof}[Proof of Proposition \ref{equivalent}]
For any $x\in M^n,$ and any 2-plane $\Pi\subset T_{x}M,$ without loss of generality, we assume that $\Pi$ is spanned by unit orthogonal vectors $Z^{1}$ and $Z^{2}$. Supplementing the basis such that $\{Z^{i}\}_{i=1}^{n}$ is orthonormal basis with respect to $g,$ i.e. $g(Z^{i},Z^{j})=\delta_{ij}.$
Then $\{\frac{1}{V}\frac{\partial}{\partial \theta}, Z^{i}\}$ is orthonormal basis with respect to $h.$\\
The assumption
\begin{equation}
||Rm(h)-\mathbf{K}||_{h}(\theta,x)\leq C e^{-2 \rho(\theta,x)},\nonumber
\end{equation}
implies
\begin{equation}\label{mixterm}
\begin{split}
&|Rm(h)(\frac{1}{V}\frac{\partial}{\partial \theta}, Z^{i},\frac{1}{V}\frac{\partial}{\partial \theta}, Z^{i})-(-1)[h(\frac{1}{V}\frac{\partial}{\partial \theta}, \frac{1}{V}\frac{\partial}{\partial \theta})h(Z^{i}, Z^{i})-h(\frac{1}{V}\frac{\partial}{\partial \theta},Z^{i})^{2}]|\\
&\leq C e^{-2\rho(\theta,x)}
\end{split}
\end{equation}
and
\begin{equation}\label{gah}
|Rm(h)(Z^{i},Z^{j},Z^{i},Z^{j})-(-1)[h(Z^{i},Z^{i})h(Z^{j},Z^{j})-h(Z^{i},Z^{j})^{2}]|\leq C e^{-2\rho(\theta,x)}.
\end{equation}
By direct computation, we have
$$\nabla_{Z}\frac{\partial}{\partial \theta}=V^{-1}\cdot Z(V)\cdot \frac{\partial}{\partial \theta},$$
and $$\nabla_{\frac{\partial}{\partial \theta}}\frac{\partial}{\partial \theta}=-\sum_{i=1}^{n}V\cdot Y^{i}(V)\cdot Y^{i}.$$
As $\vartheta=\frac{1}{V}\frac{\partial}{\partial \theta}$ is a unit normal vector field, for any tangent vector fields $W$ and $Y$ on $M^n,$ the second fundamental form of
$(M^n,g)$ in $(\mathbb{S}^{1}\times M^n, h=V^{2}d\theta^{2}+g)$ is
\begin{equation}
S(W,Y)=h(\nabla_{W}\vartheta, Y)=0.\nonumber
\end{equation}
In other words, $(M^n,g)$ is totally geodesic in $(\mathbb{S}^{1}\times M^n, h).$\\
Together with Gauss-Codazzi equations, we know that (\ref{gah}) implies
\begin{equation}\label{realgah}
\begin{split}
&|Rm(g)(Z^{i},Z^{j},Z^{i},Z^{j})-(-1)[g(Z^{i},Z^{i})g(Z^{j},Z^{j})-g(Z^{i},Z^{j})^{2}]|\leq C e^{-2\rho(\theta,x)}\\
&\leq C e^{-2r(x)},
\end{split}\nonumber
\end{equation}
which means that the sectional curvature of any 2-plane $\Pi\subset T_{x}M,$ for any $x\in M^n$ is approaching $-1,$ hence $(M^n,g)$ is $AH$ of order $2.$

Now Let us take a deep investigation of (\ref{mixterm}). We find that
\begin{equation}\label{ricgtan}
\begin{split}
Ric(g)(Z^{i},Z^{k})&=\sum_{j}Rm(g)(Z^{i},Z^{j},Z^{k},Z^{j})\\
&=\sum_{j}Rm(h)(Z^{i},Z^{j},Z^{k},Z^{j})\\
&=Ric(h)(Z^{i},Z^{k})-Rm(h)(\frac{1}{V}\frac{\partial}{\partial \theta}, Z^{i},\frac{1}{V}\frac{\partial}{\partial \theta}, Z^{k})\\
&=-nh(Z^{i},Z^{k})+O(e^{-2\rho})-Rm(h)(\frac{1}{V}\frac{\partial}{\partial \theta}, Z^{i},\frac{1}{V}\frac{\partial}{\partial \theta}, Z^{k})\\
&=-ng(Z^{i},Z^{k})+O(e^{-2r})-Rm(h)(\frac{1}{V}\frac{\partial}{\partial \theta}, Z^{i},\frac{1}{V}\frac{\partial}{\partial \theta}, Z^{k})\\
\end{split}
\end{equation}
Meanwhile,
\begin{equation}\label{curvhmix}
\begin{split}
&Rm(h)(\frac{1}{V}\frac{\partial}{\partial \theta}, Z^{i},\frac{1}{V}\frac{\partial}{\partial \theta}, Z^{k})\\
=&h(\nabla_{Z^{i}}\nabla_{(\frac{1}{V}\frac{\partial}{\partial \theta})}(\frac{1}{V}\frac{\partial}{\partial \theta})-\nabla_{(\frac{1}{V}\frac{\partial}{\partial \theta})}\nabla_{Z^{i}}(\frac{1}{V}\frac{\partial}{\partial \theta})-\nabla_{[Z^{i},\frac{1}{V}\frac{\partial}{\partial \theta}]}(\frac{1}{V}\frac{\partial}{\partial \theta}),Z^{k})\\
=&-V^{-1}\nabla_{g}^{2}V(Z^{i},Z^{k}).\\
\end{split}
\end{equation}
Substituting (\ref{curvhmix}) to (\ref{ricgtan}), we obtain
\begin{equation}
Ric(g)(Z^{i},Z^{k})+ng(Z^{i},Z^{k})-V^{-1}\nabla_{g}^{2}V(Z^{i},Z^{k})=O(e^{-2r}),\nonumber
\end{equation}
which means
\begin{equation}
||Ric(g)+ng-V^{-1}\nabla_{g}^{2}V||_{g}\leq Ce^{-2r}.\nonumber
\end{equation}
At last, we compute
\begin{equation}
\begin{split}
&(\nabla_{h}Ric(h))(Z,Y,W)\\
=&(\nabla_{Z}^{h}Ric(h))(Y,W)\\
=&\nabla_{Z}^{h}(Ric(h)(Y,W))-Ric(h)(\nabla_{Z}^{h}Y,W)-Ric(h)(Y,\nabla_{Z}^{h}W)\\
=&\nabla_{Z}^{g}(Ric(h)(Y,W))-Ric(h)(\nabla_{Z}^{g}Y,W)-Ric(h)(Y,\nabla_{Z}^{g}W)\\
=&\nabla_{Z}^{g}[Ric(g)(Y,W)-V^{-1}\nabla_{g}^{2}V(Y,W)]\\
-&[Ric(g)(\nabla_{Z}^{g}Y,W)-V^{-1}\nabla_{g}^{2}V(\nabla_{Z}^{g}Y,W)]\\
-&[Ric(g)(Y,\nabla_{Z}^{g}W)-V^{-1}\nabla_{g}^{2}V(Y,\nabla_{Z}^{g}W)]\\
=&\nabla_{g}(Ric(g)-V^{-1}\nabla_{g}^{2}V)(Z,Y,W)\\
\end{split}\nonumber
\end{equation}
and
\begin{equation}
(\nabla_{h}Ric(h))(Z,\frac{1}{V}\frac{\p}{\p \theta},\frac{1}{V}\frac{\p}{\p \theta})=\nabla_{Z}^{g}(V^{-1}\Delta_{g}V),\nonumber
\end{equation}
we therefore conclude
\begin{equation}
||\nabla_{g}(V^{-1}\Delta_{g}V)||_{g}\leq Ce^{-ar},\nonumber
\end{equation}
and
\begin{equation}
||\nabla_{g}(Ric(g)-V^{-1}\nabla_{g}^{2}V)||_{g}\leq Ce^{-ar}.\nonumber
\end{equation}
\end{proof}
We will now deduce from our static flow some basic evolution equations involving some quantities we need to estimate in the following. First we observe that the first equation in the system
\begin{equation}\label{stasicflowg}
\frac{\partial}{\partial t} g= -2Ric(g)-2ng+2V^{-1}\nabla_{g}^{2}V
\end{equation}
\begin{equation}\label{staticflowv}
\frac{\partial}{\partial t} V=\Delta_{g} V-nV
\end{equation}
is only weakly parabolic due to diffeomorphism invariance of the equation. Since the last two terms depend on $g$ and $\p g,$ the principal symbol of the first equation is the same as for the Ricci flow, we could use DeTurck's trick to break the gauge, following the presentation of List \cite{BL} very closely.
\begin{lemm}
Suppose that $(g,V)$ is a solution to
\begin{equation}\label{equationbydiffg}
\frac{\partial}{\partial t} g= -2Ric(g)-2ng+2V^{-1}\nabla_{g}^{2}V+L_{W}g
\end{equation}
\begin{equation}\label{equationbydiffv}
\frac{\partial}{\partial t} V= \Delta_{g} V-nV+dV(W)
\end{equation}
on $M^n\times[0,T],$ and that $W$ is a smooth time dependent vector field on $M^n,$ and that $\varphi_t:M^n\longrightarrow M^n$ is the 1-parameter family of diffeomorphisms generated by $W,$ that is,
\begin{equation}\label{diff}
\left\{
\begin{array}{ll}
    \frac{\partial}{\partial t}\varphi_{t}(x)=W(\varphi_{t}(x),t)  \\
    \varphi_{0}=id.\\
\end{array}
\right.\nonumber
\end{equation}
Then the pullbacks
\begin{equation}\label{change}
\left\{
\begin{array}{ll}
    \bar{g}(t):=(\varphi_{t}^{-1})^{*}g\\
    \bar{V}(t):=(\varphi_{t}^{-1})^{*}V\\
\end{array}
\right.\nonumber
\end{equation}
satisfy (\ref{stasicflowg}) and (\ref{staticflowv}) on $M^n\times[0,T].$ Moreover, $(g,V)(t)$ have the same initial values as $(\bar{g},\bar{V})(t),$ that is,
\begin{equation}
(g,V)(t)=(g_{0},V_{0}).\nonumber
\end{equation}
\end{lemm}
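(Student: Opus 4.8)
The plan is to run DeTurck's trick in its standard form: the modified system (\ref{equationbydiffg})--(\ref{equationbydiffv}) differs from (\ref{stasicflowg})--(\ref{staticflowv}) only by the gauge terms $L_Wg$ and $dV(W)$, and pulling back by the flow attached to $W$ is exactly what absorbs them. The single analytic ingredient is the transport formula for the time derivative of a pullback by a time-dependent family of diffeomorphisms: if $\psi_t$ is generated by its (Eulerian) velocity field $Z_t$, that is $\frac{\partial}{\partial t}\psi_t=Z_t\circ\psi_t$, then for every time-dependent tensor field $\alpha(t)$
\begin{equation}
\frac{\partial}{\partial t}\big(\psi_t^{*}\alpha(t)\big)=\psi_t^{*}\Big(\frac{\partial}{\partial t}\alpha(t)+L_{Z_t}\alpha(t)\Big).\nonumber
\end{equation}
I would first establish this identity by splitting the derivative into the part coming from $\alpha(t)$, which yields $\psi_t^{*}\frac{\partial}{\partial t}\alpha$, and the part coming from the moving diffeomorphism, which is a Lie derivative along $Z_t$; I would also record that $Ric$, the Hessian $\nabla_g^{2}$, the Laplacian $\Delta_g$ and pointwise multiplication are all natural (diffeomorphism-equivariant).

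Next I would compute the generator of the inverse family. Differentiating $\varphi_t\circ\varphi_t^{-1}=\mathrm{id}$ in $t$ and inserting $\frac{\partial}{\partial t}\varphi_t=W\circ\varphi_t$ shows that $\varphi_t^{-1}$ is generated by $Z_t=-\varphi_t^{*}W$, the $\varphi_t$-pullback of $W$. Feeding this into the transport formula for $\bar g=(\varphi_t^{-1})^{*}g$, and using naturality of the Lie derivative together with $(\varphi_t^{-1})^{*}\circ\varphi_t^{*}=\mathrm{id}$, gives
\begin{equation}
\frac{\partial}{\partial t}\bar g=(\varphi_t^{-1})^{*}\Big(\frac{\partial}{\partial t}g\Big)-L_{W}\bar g.\nonumber
\end{equation}
Substituting (\ref{equationbydiffg}) and using $\bar V=(\varphi_t^{-1})^{*}V$, the naturality of the geometric operations turns $-2Ric(g)-2ng+2V^{-1}\nabla_g^{2}V$ into $-2Ric(\bar g)-2n\bar g+2\bar V^{-1}\nabla_{\bar g}^{2}\bar V$, while the gauge term $L_Wg$ pulls back to $L_{(\varphi_t^{-1})^{*}W}\bar g$.

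The crux, which I expect to be the main obstacle, is the cancellation of the two Lie-derivative contributions $L_{(\varphi_t^{-1})^{*}W}\bar g$ and $-L_W\bar g$. This cancellation is the defining feature of the DeTurck modification; it is immediate when $W$ is time-independent, since then $(\varphi_t^{-1})^{*}W=W$, but for genuinely time-dependent $W$ it forces one to keep careful track of the generator $-\varphi_t^{*}W$ of the inverse flow and of all the signs. The bookkeeping is most transparent if one arranges the diffeomorphisms so that their velocity field is exactly $-W$; then the transport formula gives $\frac{\partial}{\partial t}\bar g=\varphi_t^{*}(\frac{\partial}{\partial t}g-L_Wg)$ and the gauge term is annihilated in one step, leaving $-2Ric(\bar g)-2n\bar g+2\bar V^{-1}\nabla_{\bar g}^{2}\bar V$ by naturality. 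Reconciling this with the inverse family $\varphi_t^{-1}$ of the statement — equivalently, verifying that the $L_{(\varphi_t^{-1})^{*}W}\bar g$ and $L_W\bar g$ terms above do match — is the one place the argument requires genuine care.

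The scalar equation is handled identically: $\Delta_g$ and $V$ are natural, the transport term produces a first-order term $dV(\,\cdot\,)$ along the flow that cancels the gauge term $dV(W)$ of (\ref{equationbydiffv}) by the same matching, and one is left with $\frac{\partial}{\partial t}\bar V=\Delta_{\bar g}\bar V-n\bar V$. Finally the initial-value statement is immediate: since $\varphi_0=\mathrm{id}$ we have $\bar g(0)=(\varphi_0^{-1})^{*}g(0)=g_0$ and $\bar V(0)=V_0$, so $(\bar g,\bar V)$ solves (\ref{stasicflowg})--(\ref{staticflowv}) with the prescribed initial data.
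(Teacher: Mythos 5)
Your invariant setup is the right one, and you have correctly reduced the lemma to a single identity --- but that identity is exactly where your argument stops, and with the conventions of the statement it cannot be completed. After the transport formula and the computation of the generator $-\varphi_t^{*}W$ of the inverse family, your two displayed facts combine (correctly) to
\[
\frac{\partial}{\partial t}\bar g=-2Ric(\bar g)-2n\bar g+2\bar V^{-1}\nabla_{\bar g}^{2}\bar V+L_{(\varphi_t^{-1})^{*}W}\bar g-L_{W}\bar g ,
\]
so you need $(\varphi_t^{-1})^{*}W=(\varphi_t)_{*}W=W$. For time-independent $W$ this is the invariance of a vector field under its own flow, but for the genuinely time-dependent $W$ of the DeTurck trick it is false in general: setting $P_t=(\varphi_t)_{*}W_t$, differentiating $\varphi_t^{*}P_t=W_t$ gives $\partial_tP_t+[W_t,P_t]=(\varphi_t)_{*}(\partial_tW_t)$, whence $P_0=W_0$ and $\dot P_0=\dot W_0$ but $\ddot P_0-\ddot W_0=-[W_0,\dot W_0]$, which is nonzero already for $W_t=X+tY$ with $[X,Y]\neq 0$. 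So the two Lie-derivative contributions do not cancel, and the step you flag as ``the one place the argument requires genuine care'' is a genuine gap rather than a sign-bookkeeping exercise. The same mismatch ($dV(W-\varphi_t^{*}W)$) appears in your treatment of the scalar equation.

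The repair is the arrangement you mention only in passing and should adopt outright: define the diffeomorphisms by $\frac{\partial}{\partial t}\psi_t(x)=-W(\psi_t(x),t)$, $\psi_0=id$, and set $\bar g=\psi_t^{*}g$, $\bar V=\psi_t^{*}V$. Then the Eulerian velocity of $\psi_t$ is exactly $-W$, the transport formula gives $\frac{\partial}{\partial t}\bar g=\psi_t^{*}\big(\frac{\partial}{\partial t}g-L_{W}g\big)$ in one step, and naturality of $Ric$, $\nabla^{2}$ and $\Delta$ yields (\ref{stasicflowg}) and (\ref{staticflowv}) with no matching problem; since $\psi_0=id$ the initial data are unchanged, which is all that the application to Theorem \ref{mainresult} requires (one only needs \emph{some} family of diffeomorphisms carrying the DeTurck solution to a solution of (\ref{originalequation})). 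For comparison, the paper's own proof does not argue invariantly at all: it performs the coordinate change $x^{i}=(y\circ\varphi)^{i}$ as in \cite{Shi} and \cite{BL}, verifies only that the new term $V^{-1}\nabla_{g}^{2}V$ transforms as a tensor under this change, and defers the rest of the cancellation to Lemma 3.1 of \cite{BL}; your proposal is more conceptual but, as written, inherits rather than resolves the time-dependence issue.
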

\begin{proof}
Denote by $\{y^{\alpha}\}_{\alpha=1...n}$ the coordinates where $\bar{g}$ and $\bar{V}$ are represented by $\bar{g}_{\alpha\beta}$ and $\bar{V}.$ As the same argument in \cite{Shi} and \cite{BL}, we define new coordinates by $x^{i}:=(y\circ\varphi)^{i}$ for $i=1...n.$ Then
\begin{equation}
\begin{split}
&\varphi_{t}^{\ast}(\bar{V}^{-1}(y,t)\bar{\nabla}_{\bar{g}}^{2}\bar{V}(y,t))_{ij}\\
&=V(x)^{-1}\cdot\varphi_{t}^{\ast}(\bar{\nabla}_{\alpha}\bar{\nabla}_{\beta}\bar{V}dy^{\alpha}\otimes dy^{\beta})_{ij}\\
&=V(x)^{-1}\cdot(\frac{\p x^{i}}{\p y^{\alpha}}\frac{\p x^{j}}{\p y^{\beta}}\nabla_{i}\nabla_{j}V\cdot\frac{\p y^{\alpha}}{\p x^{i}}dx^{i}\frac{\p y^{\beta}}{\p x^{j}}dx^{j})\\
&=V^{-1}\nabla_{g}^{2}V.\\
\end{split}\nonumber
\end{equation}
The rest of the proof is unchanged as \cite{BL} Lemma 3.1.
\end{proof}
An easy computation shows that if $W$ is given by $$W^{k}=g^{ij}(\Gamma_{ij}^{k}-\Gamma_{ij}^{k}(g_{0})),$$ the trace of the tensor which is the difference between the Christoffel symbols of the Levi-Civita connections of $g$ and of $g_{0}$, the equation (\ref{equationbydiffg}) is strictly parabolic. The idea is from DeTurck \cite{DeT}. We pursue this idea via taking all the derivatives with respect to the initial metric $g_{0}.$ This is characterized as follows. For convenience, we use $\hat{g}$ to represent $g_{0},$ and then $\hat{\nabla}$ and $\hat{\Delta}$ are covariant derivative and Laplacian with respect to $g_{0}.$
\begin{lemm}
Let $W^{k}=g^{ij}(\Gamma_{ij}^{k}-\hat{\Gamma}_{ij}^{k}),$ then for given $V,$ the equation (\ref{equationbydiffg}) is strictly parabolic.
\end{lemm}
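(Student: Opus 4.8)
The plan is to compute the principal symbol of the operator appearing on the right-hand side of (\ref{equationbydiffg}), regarded as a second-order operator in $g$ with $V$ held fixed, and to show that with the DeTurck choice $W^{k}=g^{ij}(\Gamma_{ij}^{k}-\hat{\Gamma}_{ij}^{k})$ this symbol equals $|\xi|_{g}^{2}$ times the identity on symmetric $2$-tensors. Strict parabolicity is then immediate, since $|\xi|_{g}^{2}>0$ for every $\xi\neq 0$. The whole point is the classical DeTurck cancellation: the gauge term $L_{W}g$ is engineered to cancel precisely the non-Laplacian second-order contributions in $-2Ric(g)$.

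First I would expand $Ric(g)$ in local coordinates from $R_{ij}=\p_{k}\Gamma^{k}_{ij}-\p_{i}\Gamma^{k}_{kj}+\cdots$ and isolate the terms carrying two derivatives of $g$. The second-order part of $-2Ric(g)$ is
\[
g^{kl}\p_{k}\p_{l}g_{ij}+g^{kl}\p_{i}\p_{j}g_{kl}-g^{kl}\p_{i}\p_{k}g_{jl}-g^{kl}\p_{j}\p_{k}g_{il},
\]
with every remaining contribution at most first order in $g$. The first summand is the leading part of the rough Laplacian acting componentwise, while the three bracketed summands are exactly the obstruction to ellipticity. Next I would extract the top-order part of $L_{W}g$. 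Because $\Gamma^{k}_{ij}$ carries one derivative of $g$ whereas $\hat{\Gamma}^{k}_{ij}$ depends only on $g_{0}$, the field $W$ is first order in $g$ at leading order, so $(L_{W}g)_{ij}=\nabla_{i}W_{j}+\nabla_{j}W_{i}$ is second order; lowering the index and retaining only second-order terms gives $W_{j}\sim g^{pq}\p_{p}g_{qj}-\tfrac{1}{2}g^{pq}\p_{j}g_{pq}$, hence
\[
(L_{W}g)_{ij}\sim g^{pq}\p_{i}\p_{p}g_{qj}+g^{pq}\p_{j}\p_{p}g_{qi}-g^{pq}\p_{i}\p_{j}g_{pq}.
\]

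Adding the two displays, the three bad summands cancel term by term against those in $-2Ric(g)$ (using symmetry of $g_{kl}$ to match indices), leaving only $g^{kl}\p_{k}\p_{l}g_{ij}$ plus lower-order terms. I would then observe that the zeroth-order term $-2ng$ contributes nothing at top order, and that $2V^{-1}\nabla_{g}^{2}V$ is invisible to the principal symbol: since $\nabla_{g}^{2}V=\p^{2}V-\Gamma\cdot\p V$ involves $g$ only through the Christoffel symbols, it is first order in $g$ once $V$ is given—this is exactly where the hypothesis that $V$ is fixed enters. Replacing $\p_{k}$ by $\xi_{k}$, the principal symbol of (\ref{equationbydiffg}) is therefore $g^{kl}\xi_{k}\xi_{l}\cdot\mathrm{Id}=|\xi|_{g}^{2}\,\mathrm{Id}$, which is positive definite, and this is precisely the assertion of strict parabolicity.

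The argument is essentially index bookkeeping, so the main obstacle is organizational rather than conceptual: one must track carefully which metric and Christoffel factors are being differentiated when expanding $\nabla_{i}W_{j}$, and verify that the subtraction of $\hat{\Gamma}$—which is what makes $W$ a genuine vector field rather than the coordinate-dependent harmonic gauge—does not affect any second-order term (it does not, since $\hat{\Gamma}$ is independent of $g$). I expect the cleanest route is to linearize at an arbitrary background and freeze coefficients, so that all three cancellations can simply be read off by matching the summands in the two displays above.
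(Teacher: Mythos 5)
Your proof is correct and follows essentially the same route as the paper: the paper simply carries out (by citation to List's Lemma 3.2) the very DeTurck cancellation you describe, recording the result as the explicit quasi-linear system whose top-order term is $g^{kl}\hat{\nabla}_{k}\hat{\nabla}_{l}g_{ij}$, from which strict parabolicity is read off exactly as in your principal-symbol computation. The cancellation of the three non-Laplacian second-order terms of $-2Ric(g)$ against $L_{W}g$, and the observation that $2V^{-1}\nabla_{g}^{2}V$ is only first order in $g$ once $V$ is fixed, are both accurate.
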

\begin{proof}
The calculation goes the same as \cite{BL} Lemma 3.2. The equations
(\ref{equationbydiffg}) and (\ref{equationbydiffv}) turn to
\begin{equation}\label{mainequation}
\left\{
\begin{array}{ll}
\begin{split}
    \frac{\partial}{\partial t} g_{ij} &=g^{kl}\hat{\nabla}_{k}\hat{\nabla}_{l}g_{ij}-g^{kl}g_{ip}\hat{g}^{pq}\hat{R}_{jkql}-g^{kl}g_{jp}\hat{g}^{pq}\hat{R}_{ikql}\\
    &+\frac{1}{2}g^{kl}g^{pq}(\hat{\nabla}_{i}g_{pk}\cdot\hat{\nabla}_{j}g_{ql}+2\hat{\nabla}_{k}g_{jp}\cdot\hat{\nabla}_{q}g_{il}-2\hat{\nabla}_{k}g_{jp}\cdot\hat{\nabla}_{l}g_{iq}\\
    &-2\hat{\nabla}_{j}g_{pk}\cdot\hat{\nabla}_{l}g_{iq}-2\hat{\nabla}_{i}g_{pk}\cdot\hat{\nabla}_{l}g_{jq})\\
    &+2V^{-1}\hat{\nabla}_{i}\hat{\nabla}_{j}V-V^{-1}\cdot g^{kl}(\hat{\nabla}_{i}g_{jl}+\hat{\nabla}_{j}g_{il}-\hat{\nabla}_{l}g_{ij})\cdot \hat{\nabla}_{k}V-2ng_{ij}\\
\end{split}\\
    \frac{\partial}{\partial t} V= g^{ij}\hat{\nabla}_{i}\hat{\nabla}_{j}V-nV.\\
    g(\cdot,0)=g_{0}  \\
    V(\cdot,0)=V_{0}.\\
\end{array}
\right.
\end{equation}
It is then obvious to see that these equations are strictly parabolic.
\end{proof}

The above two lemmas show that once we obtain the short-time existence of the initial problem (\ref{mainequation}), after pulling back the solutions to (\ref{mainequation}) by a specific diffeomorphism, then we will also get the short-time existence to our static flow (\ref{originalequation}). Hence now we prove the existence of a solution to the initial value problem (\ref{mainequation}) on $M^n\times[0,T].$ We fix a point $o\in M^n$ and $r(x)$ is the distance function to $o$ with respect to $\hat{g}.$ Since $M^n$ is complete noncompact, it gurantees a family of domains $\{D_{k}\subset M^n: k=1,2,3...\}$ such that for each $k:$
\begin{enumerate}
  \item the boundary $\p D_{k}$ is a smooth $(n-1)$-dimensional submanifold of $M^n;$
  \item the closure $\bar{D}_{k}$ is compact in $M^n;$
  \item $\bigcup_{k=1}^{\infty}D_{k}=M^n;$
  \item $B(o,k)\subset D_{k};$
\end{enumerate}
where $B(o,k)$ is a geodesic ball of center $o$ and radius $k$ with respect to $\hat{g}.$
The parabolic boundary $\Gamma_k$ of $D_k\times[0,T]$ is defined by:
$$\Gamma_k:=(D_k\times\{0\})\cup(\p D_{k}\times[0,T]).$$

For the short time existence of the evolution equation, our strategy is to use Schauder fixed point theorem that
a compact mapping of a closed bounded convex set in a Banach space into itself has a fixed point. Thus we need to construct a Banach space and its closed bounded convex set as well as an operator, and meanwhile show that the operator has a fixed point which is just the solution to the static flow (\ref{mainequation}). Since our initial data $(M^n,\hat{g})$ is asymptotical hyperbolic, which is conformally compactifiable, we can find special coordinate charts called M$\ddot{o}$bius charts. For a concrete definition of M$\ddot{o}$bius charts, please see \cite{Lee} chapter 2. The advantage is that the geometry of $(M^n,\hat{g})$ is uniformly bounded in M$\ddot{o}$bius charts.

Before we introduce our Banach space, we give the definition of general $H\ddot{o}lder$ space used in standard parabolic theory. Denote $Q_{T}:=\Omega\times(0,T).$ For $0<\mu<1$ and $l$ a nonnegative integer, we let
\begin{equation}
\begin{split}
C^{2l+\mu,l+\frac{\mu}{2}}(\bar{Q}_{T})=&\{u; \p^{\beta}\p_{t}^{r}u\in C^{\mu,\frac{\mu}{2}}(\bar{Q}_{T}),\\
&for~ any~ \beta,~ r ~such~ that ~|\beta|+2r\leq 2l \},\\
\end{split}\nonumber
\end{equation}
where $C^{\mu,\frac{\mu}{2}}(\bar{Q}_{T})$ is the set of all functions on $\bar{Q}_{T}$ such that $[u]_{\mu,\frac{\mu}{2};\bar{Q}_{T}}<+\infty,$ endowed with the norm
\begin{equation}
|u|_{\mu,\frac{\mu}{2};\bar{Q}_{T}}=|u|_{0;Q_{T}}+[u]_{\mu,\frac{\mu}{2};\bar{Q}_{T}},\nonumber
\end{equation}
here
\begin{equation}
|u|_{0;\bar{Q}_{T}}=\sup_{(x,t)\in \bar{Q}_{T}}|u(x,t)|\nonumber
\end{equation}
and
\begin{equation}
[u]_{\mu,\frac{\mu}{2};\bar{Q}_{T}}=\sup_{(x,t),(y,s)\in \bar{Q}_{T},(x,t)\neq(y,s)}\frac{|u(x,t)-u(y,s)|}{(|x-y|^{2}+|t-s|)^{\frac{1}{2}}}.\nonumber
\end{equation}
To define $H\ddot{o}lder$ norms for tensor bundle $E,$ let $C^{2l+\mu,l+\frac{\mu}{2}}(M^n\times[0,T];E)$ be the space of tensor fields whose components in each M$\ddot{o}$bius coordinate chart are in $C^{2l+\mu,l+\frac{\mu}{2}}(M^n\times[0,T]).$

We define a space
\begin{equation}
\begin{split}
\mathbf{B}_{\alpha}=&\{s(\cdot,t)\in Sym(T^{*}M\otimes T^{*}M)\mid \\
&e^{2r}s\in C^{\alpha,\frac{\alpha}{2}}(M^n\times[0,T]; T^{*}M\otimes T^{*}M),\\
&e^{2r}\hat{\nabla}s\in C^{\alpha,\frac{\alpha}{2}}(M^n\times[0,T]; T^{*}M\otimes T^{*}M\otimes T^{*}M),t\in[0,T]\}
\end{split}\nonumber
\end{equation}
with norms
\begin{equation}
\begin{split}
\|s\|_{\mathbf{B}_{\alpha}}:&=\|e^{2r}s\|_{C^{\alpha,\frac{\alpha}{2}}(M^n\times[0,T]; T^{*}M\otimes T^{*}M)}\\
&+\|e^{2r}\hat{\nabla}s\|_{C^{\alpha,\frac{\alpha}{2}}(M^n\times[0,T]; T^{*}M\otimes T^{*}M\otimes T^{*}M)}\\
&<+\infty
\end{split}\nonumber
\end{equation}
Obviously, $\mathbf{B}_{\alpha}$ is a Banach space. For any $\epsilon>0,$ we let $$\mathbf{B}_{\alpha,\epsilon}=\{s\in\mathbf{B}_{\alpha}\mid\|s\|_{\mathbf{B}_{\alpha}}\leq\epsilon\}.$$ It is a closed bounded convex set in $\mathbf{B}_{\alpha}$.

Next we need to define our operator $$\mathbf{A}:\mathbf{B}_{\alpha,\epsilon}\longrightarrow \mathbf{B}_{\alpha}$$ and prove that it is well-defined, the range of $\mathbf{A}$ still lies in $\mathbf{B}_{\alpha,\epsilon},$ and $\mathbf{A}$ is compact in $\mathbf{B}_{\alpha,\epsilon}.$ In order to define $\mathbf{A},$ it is necessary to get the estimates of $V.$ We have the following

\begin{lemm}\label{uniquev}
Suppose that $(M^n,\hat{g},\hat{V})$ is $AS$ of order $a\geq2,$ and $\|e^{-r}\hat{V}\|_{C^{2+\alpha}}\leq C.$ Consider
\begin{equation}
\left\{
\begin{array}{ll}
    \frac{\partial}{\partial t} V= g^{ij}\hat{\nabla}_{i}\hat{\nabla}_{j}V-nV  \\
    V(\cdot,0)=\hat{V}>0~~~ on~~~M^{n}\times[0,T]\\
\end{array}
\right.\nonumber
\end{equation}
If $g(x,t)\in\mathbf{B}_{\alpha,\epsilon}+\hat{g},$ then the above initial problem has a unique positive solution $V$ from the class $C^{2+\alpha,1+\frac{\alpha}{2}}(M^n\times[0,T])$ and it satisfies
\begin{equation}
e^{r}(V-\hat{V})\in C^{2+\alpha,1+\frac{\alpha}{2}}(M^n\times[0,T]).\nonumber
\end{equation}
\end{lemm}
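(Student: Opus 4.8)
The plan is to linearize around the background static data and solve for the decaying correction. Writing $V = \hat{g}$-independent as $V = \hat{V} + \phi$, the initial value problem becomes
$$\frac{\partial}{\partial t}\phi = g^{ij}\hat{\nabla}_i\hat{\nabla}_j\phi - n\phi + f, \qquad \phi(\cdot,0) = 0,$$
where the source is $f := g^{ij}\hat{\nabla}_i\hat{\nabla}_j\hat{V} - n\hat{V}$. Since $\phi$ now carries homogeneous initial data, the whole problem is to produce a solution $\phi$ with $e^{r}\phi \in C^{2+\alpha,1+\frac{\alpha}{2}}(M^n\times[0,T])$; then $V = \hat{V} + \phi$ is the desired solution and $e^{r}(V-\hat{V}) = e^{r}\phi$ lies in the asserted class. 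I would carry this out by the exhaustion already fixed in the text: solve the corresponding initial--boundary value problem on each $D_k\times[0,T]$ with $\phi|_{\partial D_k} = 0$, obtain bounds uniform in $k$ and measured in the weight $e^{r}$, and pass to the limit $k\to\infty$.

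The first point to settle is that the source decays at rate $e^{-r}$. Splitting $g^{ij} = \hat{g}^{ij} + (g^{ij}-\hat{g}^{ij})$ gives
$$f = \big(\Delta_{\hat{g}}\hat{V} - n\hat{V}\big) + (g^{ij}-\hat{g}^{ij})\hat{\nabla}_i\hat{\nabla}_j\hat{V}.$$
For the second term, $g - \hat{g} \in \mathbf{B}_{\alpha,\epsilon}$ decays like $e^{-2r}$ while $\|e^{-r}\hat{V}\|_{C^{2+\alpha}}\le C$ forces $\hat{\nabla}^2\hat{V} = O(e^{r})$, so the product is $O(e^{-r})$ with the H\"older control built into $\mathbf{B}_{\alpha,\epsilon}$. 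For the first term I would invoke Proposition \ref{equivalent}(3): writing $q = \hat{V}^{-1}\Delta_{\hat{g}}\hat{V}$, that bound reads $\|\hat{\nabla} q\|_{\hat{g}}\le Ce^{-ar}$, so $q$ tends to its model value $n$ along each end at rate $e^{-ar}$, whence $\Delta_{\hat{g}}\hat{V} - n\hat{V} = \hat{V}(q-n) = O(e^{r}\cdot e^{-ar}) = O(e^{-r})$ because $a\ge 2$. Thus $e^{r} f \in C^{\alpha,\frac{\alpha}{2}}(M^n\times[0,T])$.

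On each $\bar{D}_k$ the operator $g^{ij}\hat{\nabla}_i\hat{\nabla}_j - n$ is strictly parabolic with bounded coefficients, so linear parabolic Schauder theory produces a unique $\phi_k \in C^{2+\alpha,1+\frac{\alpha}{2}}(\bar{D}_k\times[0,T])$. The heart of the argument is a barrier yielding a uniform weighted $C^0$ bound: because the model computation gives $(\Delta_{\hat{g}} - n)e^{-r}\le -c\,e^{-r}$ for some $c>0$ at large $r$, a multiple $\psi = Ke^{-r}$ (replacing $e^{-r}$ by a smooth function comparable to it, since $r$ is merely Lipschitz) is a supersolution dominating $|f|$ once $K$ is large, and $\pm\psi$ bound $\phi_k$ on the parabolic boundary $\Gamma_k$; the Ecker--Huisken maximum principle then gives $|\phi_k|\le Ke^{-r}$ uniformly in $k$. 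Interior parabolic Schauder estimates read in M\"obius charts, where the geometry of $(M^n,\hat{g})$ is uniformly bounded, upgrade this to uniform bounds for $e^{r}\phi_k$ in $C^{2+\alpha,1+\frac{\alpha}{2}}$. A diagonal Arzel\`a--Ascoli argument then extracts a limit $\phi$ solving the equation on all of $M^n\times[0,T]$ with $e^{r}\phi$ in the stated class.

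Positivity and uniqueness both follow from the substitution $V = e^{-nt}\tilde{V}$, which removes the zeroth-order term and turns the equation into $\frac{\partial}{\partial t}\tilde{V} = g^{ij}\hat{\nabla}_i\hat{\nabla}_j\tilde{V}$. Since $\tilde{V}(\cdot,0) = \hat{V} > 0$ and $\tilde{V}$ has controlled growth, the maximum principle gives $\tilde{V} > 0$, hence $V > 0$; applied to the difference of two solutions with homogeneous initial data and $e^{r}$-decay it gives uniqueness. I expect the main obstacle to be the uniform weighted estimates --- specifically, constructing the barrier adapted to the genuine asymptotically hyperbolic operator (not merely the model) and organizing the interior Schauder estimates in M\"obius charts so that all constants are independent of $k$ and the weight $e^{r}$ is respected throughout.
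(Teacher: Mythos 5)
Your proposal is correct and follows essentially the same route as the paper: the same decomposition $v=V-\hat{V}$ with source $g^{ij}\hat{\nabla}_{i}\hat{\nabla}_{j}\hat{V}-n\hat{V}$, the same exhaustion by the domains $D_{k}$ with homogeneous data on $\Gamma_{k}$ followed by an Arzel\`a--Ascoli limit, and the same device of multiplying by $e^{nt}$ to remove the zeroth-order term before applying a noncompact maximum principle (the paper uses Karp--Li) for positivity and uniqueness. The only divergence is technical: where you build an explicit barrier $Ke^{-r}$ and then run interior Schauder estimates in M\"obius charts, the paper obtains the uniform weighted bound directly from the localized Schauder estimate $\|v_{k}\|_{C^{2+\alpha,1+\frac{\alpha}{2}}(B(x,1)\times[0,T])}\leq C\|f\|_{C^{\alpha,\frac{\alpha}{2}}(B(x,2)\times[0,T])}$ together with the $e^{-r}$ decay of the source --- the same decay you verify, more explicitly, via Proposition \ref{equivalent}.
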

\begin{proof}
For any $x\in M^n,$ there exists $k$ such that $x\in B(x,1)\subset B(x,2)\subset D_{k}.$
We solve the following initial-boundary value problem
\begin{equation}\label{inibouv}
\left\{
\begin{array}{ll}
    \frac{\partial}{\partial t} V= g^{ij}\hat{\nabla}_{i}\hat{\nabla}_{j}V-nV,~~~on~ D_{k}\times[0,T]\\
    V\mid_{\Gamma_k}=\hat{V}.\\
\end{array}
\right.
\end{equation}
If we let $v=V-\hat{V},$ (\ref{inibouv}) is equivalent to the following initial-boundary value problem
\begin{equation}\label{v}
\left\{
\begin{array}{ll}
    \frac{\p}{\p t}v=g^{ij}\hat{\nabla}_{i}\hat{\nabla}_{j}v-nv+g^{ij}\hat{\nabla}_{i}\hat{\nabla}_{j}\hat{V}-n\hat{V},~~~on~ D_{k}\times[0,T]\\
    v\mid_{\Gamma_k}=0.\\
\end{array}
\right.
\end{equation}
From the assumption that $g(x,t)=s(x,t)+\hat{g}\in\mathbf{B}_{\alpha,\epsilon}+\hat{g},$ note that we choose $\epsilon\ll 1$ such that the above equation is uniformly parabolic, together with standard parabolic theory(see \cite{Lady} P320 Theorem 5.2), we know linear initial-boundary value problem (\ref{v}) has a unique solution $v_{k}$ from the class $C^{2+\alpha,1+\frac{\alpha}{2}}(\bar{D}_{k}\times[0,T]),$ and it satisfies
\begin{equation}\label{VkminusVintial}
 \|v_{k}\|_{C^{2+\alpha,1+\frac{\alpha}{2}}(B(x,1)\times[0,T])}\leq C \| g^{ij}\hat{\nabla}_{i}\hat{\nabla}_{j}\hat{V}-n\hat{V}\|_{C^{\alpha,\frac{\alpha}{2}}(B(x,2)\times[0,T])}.
\end{equation}
Since the right hand side in (\ref{VkminusVintial}) is independent of $k$ and we have $\bigcup_{k=1}^{\infty}D_{k}=M^n,$ we can take the limit $k\longrightarrow\infty$ and get the convergence of a subsequence of the solutions $v_{k}$ in the $C^{2+\beta,1+\frac{\beta}{2}}(\beta<\alpha)$ topology on compact subsets of $M^n\times [0,T]$ to a $C^{2+\alpha,1+\frac{\alpha}{2}}$ solution $v$ on $M^n\times [0,T]$ by the theorem of Arzela-Ascoli for any given $s(x,t)\in\mathbf{B}_{\alpha,\epsilon}.$
From (\ref{VkminusVintial}), we obtain
\begin{equation}\label{vestimate}
\begin{split}
&\|v\|_{C^{2+\alpha,1+\frac{\alpha}{2}}(B(x,1)\times[0,T])}\\
\leq&C(\| g-\hat{g}\|_{C^{\alpha,\frac{\alpha}{2}}(B(x,2)\times[0,T])}\cdot\|\hat{V}\|_{C^{2+\alpha}(B(x,2))}+\|\hat{\Delta}\hat{V}-n\hat{V}\|_{C^{\alpha}(B(x,2))}).\\
\end{split}\nonumber
\end{equation}
When $y\in B(x,2),$ $r(x)$ and $r(y)$ is equivalent, thus
\begin{equation}
\|v\|_{C^{2+\alpha,1+\frac{\alpha}{2}}(B(x,1)\times[0,T])}\leq Ce^{-r(x)},~~~t\in[0,T].\nonumber
\end{equation}
Hence $V=v+\hat{V}$ is a solution to (\ref{inibouv}). In order to get that $V$ is positive and unique, it is enough to get the same results for the original equation (\ref{staticflowv}).
As $\hat{V}>0$ and $\hat{V}=O(e^{r}),$ though the manifold is complete noncompact, there still exists a positive constant $d$ such that $\hat{V}\geq d$ on the whole $M^n.$ We define $J(x,t)=d-e^{nt}V(x,t),$ then $J$ satisfies
\begin{equation}
\frac{\partial}{\partial t} J= \Delta_{g}J,~~on~M^n\times[0,T]\nonumber
\end{equation}
and $J(\cdot,0)\leq0.$
Since
\begin{equation}
\begin{split}
&\int_{0}^{T}\int_{M^{n}}\exp(-r(x)^{2})J_{+}^{2}(x,t)d\mu_{g(t)}dt\\
\leq& C\int_{0}^{T}\int_{M^{n}}\exp(-r(x)^{2}+2r(x)+2nt)d\mu_{\hat{g}}dt\\
<&\infty,
\end{split}\nonumber
\end{equation}
due to the maximum principle of Karp and Li \cite{KL}, we get $J\leq 0$ on $M^n\times[0,T],$ which means $V$ is always positive on $M^n\times[0,T].$\\
If $V_{1}$ and $V_{2}$ are two solutions to (\ref{staticflowv}) with the same initial data $\hat{V}.$ Let $I(x,t)=e^{nt}(V_{1}-V_{2}),$ then
\begin{equation}
\frac{\partial}{\partial t} I= \Delta_{g}I,~~on~M^n\times[0,T]\nonumber
\end{equation}
and $J(\cdot,0)=0.$ Also by the same maximum principle, we conclude $I=0$ on $M^n\times[0,T],$ that is, $V_{1}=V_{2}$ on $M^n\times[0,T].$
\end{proof}

Motivated by the above lemma, we define the operator $\mathbf{A}:\mathbf{B}_{\alpha,\epsilon}\longrightarrow \mathbf{B}_{\alpha}$ by
\begin{equation}
\mathbf{A}(s)= G-\hat{g}\nonumber
\end{equation}
where
\begin{equation}
\left\{
\begin{array}{ll}
    \frac{\partial}{\partial t} G_{ij}=g^{kl}\hat{\nabla}_{k}\hat{\nabla}_{l}G_{ij}+H_{ij}(\hat{R}m,\hat{\nabla}g,g,V^{-1}\nabla_{g}^{2}V),~~~on~ M^n\times[0,T]\\
    G_{ij}(\cdot,0)=\hat{g}_{ij}.\\
\end{array}
\right.\nonumber
\end{equation}
Here $H_{ij}(\hat{R}m,\hat{\nabla}g,g,V^{-1}\nabla_{g}^{2}V)$ contains all remaining terms of the evolution equation of $g=s+\hat{g}$ in (\ref{mainequation}) and $V$ is the unique positive solution in Lemma \ref{uniquev} for given $g=s+\hat{g}$. Since we first need to get the a priori estimates, we claim the following:
\begin{prop}\label{mainprop}
Suppose $(M^n,\hat{g})$ is a complete noncompact AH Riemannian manifold of order $\eta$ and it contains an essential set. Let $f,$ which is smooth on $M^{n}\times(0, T]$ and continuous
on $M^{n}\times[0, T]$ satisfy
\begin{equation}
(\frac{\p}{\p t}-\hat{\Delta})f(x,t)\leq Q(x,t)\nonumber
\end{equation}
on $M^n \times [0,T],$ where $Q(x,t)$ is a function on $M^n\times[0,T]$ satisfying $|Q(x,t)|\leq Le^{-\mu r(x)}.$ Here $\mu\in\mathbb{R}$ and $L>0$ are constants and $r(x)$ is the distance function to the essential set with respect to $\hat{g}.$ Assume that $|f|+|\hat{\nabla}f|\leq C_{1}$ on $M^n\times[0,T]$. If $f(x,0)\leq 0,$ then there exists $\tilde{T}=\tilde{T}(\mu,n,C)\leq T,$ such that for all $(x,t)\in M^n \times [0,\tilde{T}],$ we have $$f(x,t)\leq 2Lte^{-\mu r(x)}.$$
\end{prop}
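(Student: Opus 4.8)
The plan is to convert the differential inequality into a comparison against an explicit supersolution and then invoke a weak maximum principle of Ecker--Huisken type for bounded subsolutions on the complete noncompact manifold $(M^n,\hat g)$. The natural candidate barrier is exactly the conjectured bound, $\psi(x,t):=2Lt\,e^{-\mu r(x)}$, and the entire argument rests on two points: that $\psi$ is a supersolution dominating $Q$ for short time, and that $f-\psi$ cannot become positive.

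First I would compute the parabolic operator applied to $\psi$. Since $|\hat\nabla r|=1$ off the cut locus,
\[
\hat\Delta\big(e^{-\mu r}\big)=e^{-\mu r}\big(\mu^{2}-\mu\,\hat\Delta r\big),
\]
so that
\[
\Big(\frac{\p}{\p t}-\hat\Delta\Big)\psi=e^{-\mu r}\Big(2L-2Lt\,(\mu^{2}-\mu\,\hat\Delta r)\Big).
\]
Because $(M^n,\hat g)$ is $AH$ its sectional curvature is bounded (it tends to $-1$), and the Laplacian comparison theorem furnishes a two-sided bound on $\hat\Delta r$ on all of $M^n$: near the essential set the distance to a compact set is smooth with bounded Hessian, while at the cut locus the comparison inequalities hold distributionally, which is all that the integral form of the maximum principle below requires. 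Setting $C:=\sup_{M^n}|\mu^{2}-\mu\,\hat\Delta r|<\infty$ and $\tilde T:=\min\{T,\frac{1}{2C}\}$ (so that $\tilde T$ depends only on $\mu$, $n$ and the fixed comparison geometry), we obtain for $t\le\tilde T$
\[
\Big(\frac{\p}{\p t}-\hat\Delta\Big)\psi\ \ge\ L\,e^{-\mu r}\ \ge\ Q(x,t),
\]
the last step using $Q\le|Q|\le Le^{-\mu r}$.

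Next I would set $w:=f-\psi$. By the above $(\frac{\p}{\p t}-\hat\Delta)w\le0$ on $M^n\times[0,\tilde T]$, the hypothesis $f(\cdot,0)\le0$ gives $w(\cdot,0)\le0$, and since $\psi\ge0$ we have $w_{+}\le f_{+}\le C_{1}$, so $w$ is bounded above. To conclude $w\le0$ I would run the energy form of the weak maximum principle: multiply $(\frac{\p}{\p t}-\hat\Delta)w\le0$ by $w_{+}$ against a Gaussian-in-space cutoff $\exp\!\big(-r^{2}/(4(\tilde T-t+\delta))\big)$, integrate by parts, and let $\delta\downarrow0$. The bound $|\hat\nabla f|\le C_{1}$ controls the gradient cross-terms produced by the cutoff, and the decisive point is that the Gaussian weight dominates the exponential volume growth $\mathrm{Vol}\,B(o,R)\sim e^{(n-1)R}$ of the $AH$ manifold, so that $\int_{0}^{\tilde T}\!\int_{M^n}e^{-c r^{2}}w_{+}^{2}\,d\mu\,dt$ is finite. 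This is precisely the integrability hypothesis that closes the maximum principle and forces $w_{+}\equiv0$, i.e. $f\le 2Lt\,e^{-\mu r}$ on $M^n\times[0,\tilde T]$.

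The barrier computation is routine; the main obstacle is this last step. On an $AH$ manifold the volume grows exponentially, so a naive localization by a compactly supported cutoff fails, since its gradient error scales like $\mathrm{Vol}\,B(o,2R)$ and does not vanish as $R\to\infty$. The argument succeeds only because the Gaussian weight adapted to the heat operator, together with the uniform bounds on $f$ and $\hat\nabla f$, exactly compensates this exponential growth; establishing this integrability, rather than the supersolution estimate, is where the real work lies.
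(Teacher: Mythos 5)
Your argument is correct and is essentially the same Ecker--Huisken strategy as the paper's, differing only in the change of variables: the paper conjugates, setting $S=e^{\mu r}f-2Lt$ and then applying the full weak maximum principle for subsolutions of $(\partial_t-\hat\Delta)S\le \vec\Lambda\cdot\hat\nabla S+\lambda S$ with bounded drift and potential, whereas you subtract the barrier $\psi=2Lt e^{-\mu r}$ directly, so that $w=f-\psi$ is a plain heat subsolution and only the Gaussian-weighted energy argument for $(\partial_t-\hat\Delta)w\le 0$ is needed. Since $S=e^{\mu r}(f-\psi)$ and $e^{\mu r}>0$, the two quantities have the same sign, so the reductions are equivalent; your version buys a marginally cleaner maximum principle, at the cost of having to control $\hat\nabla\psi$ (which grows like $e^{-\mu r}$ when $\mu<0$, still harmless against the Gaussian weight). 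In both versions the short time $\tilde T$ comes from the same place, absorbing the term $2Lt(\mu^2-\mu\hat\Delta r)$, and both need a \emph{two-sided} bound on $\hat\Delta r$. The one imprecise point in your write-up is the appeal to ``distributional comparison at the cut locus'': the distributional Laplacian of a distance function has a \emph{nonpositive} singular part on the cut locus, which gives $\hat\Delta r\le C$ globally but not $\hat\Delta r\ge -C$, and for $\mu>0$ it is the lower bound that your supersolution inequality requires, so that fallback would not close the argument in general. It is harmless here only because $r$ is the distance to an \emph{essential set}, outside of which $r$ is smooth with no cut locus and $|\hat\Delta r-n|$ decays (this is exactly what the paper invokes via Lemma 2.1 of \cite{HQS}, extending $r$ smoothly into the interior of the essential set); you should justify the bound on $\hat\Delta r$ that way rather than by cut-locus comparison.
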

\begin{proof}
We will now adapt a maximum principle for heat equations by Ecker and Huisken, refer to Theorem 4.3 in \cite{EH}.\\
Let $S(x,t)=e^{\mu r(x)}f(x,t)-2Lt.$ As $r(x)$ is the distance function to the essential set, we extend $r(x)$ smoothly to the interior of the essential set, then $S(x,t)$ is smooth on $M^{n}\times(0, T]$ and continuous
on $M^{n}\times[0, T].$ We have $S(x,0)\leq 0$ and
\begin{equation}
\begin{split}
\frac{\p S}{\p t}&\leq\hat{\Delta}S-2\mu\hat{\nabla}r\cdot\hat{\nabla}S+(\mu^{2}|\hat{\nabla}r|^{2}-\mu\hat{\Delta}r)S\\
&+2(\mu^{2}|\hat{\nabla}r|^{2}-\mu\hat{\Delta}r)Lt+e^{\mu r}Q-2L.\\
\end{split}\nonumber
\end{equation}
First notice that in view of the assumption $(M^n,\hat{g})$ is $AH$ of order $\eta$ and lemma 2.1 in \cite{HQS}, $|\hat{\Delta}r-n|\leq Ce^{-2r}$ for $\eta>2,$ $Cre^{-2r}$ for $\eta=2,$ and $Ce^{-\eta r}$ for $0<\eta<2,$ where the constant $C$ only depends on the $AH$ constant of $(M^n,\hat{g}).$\\ Now observe that $|\hat{\nabla}r|\equiv1,$ we can therefore choose $T_{1}\leq T$ such that for $t\leq T_{1},$ we have $$2(\mu^{2}|\hat{\nabla}r|^{2}-\mu\hat{\Delta}r)Lt\leq L.$$
Using the assumption $|Q(x,t)|\leq Le^{-\mu r(x)}$ we obtain $$e^{\mu r}Q-L\leq 0.$$
We therefore conclude for $t\leq T_1,$
\begin{equation}
\begin{split}
\frac{\p S}{\p t}-\hat{\Delta}S\leq\vec{\Lambda}\cdot\hat{\nabla}S+\lambda S,
\end{split}\nonumber
\end{equation}
where the vector $\vec{\Lambda}=-2\mu\hat{\nabla}r,$ $\lambda=\mu^{2}|\hat{\nabla}r|^{2}-\mu\hat{\Delta}r$ and the inner product is with respect to $\hat{g}.$\\
We then proceed to show all the conditions in \cite{EH} Theorem 4.3 are satisfied in our situations and we could make the finial conclusion.\\
Since $(M^n,\hat{g})$ is $AH$, there exists a constant $k,$ say $-(C+1),$ such that $Ric_{\hat{g}}\geq (n-1)k.$ Applying now the absolute volume comparison result we arrive at
$$vol_{\hat{g}}B(x,r)\leq v(n,k,r),$$
where $v(n,k,r)$ denotes the volume of a ball of radius $r$ in the constant-curvature space form $S_{k}^{n}.$ For $k=-(C+1),$ we have
\begin{equation}
\begin{split}
v(n,k,r)&=\int_{0}^{r}\int_{S^{n-1}}(\frac{\sinh\sqrt{C+1}\tau}{\sqrt{C+1}})^{n-1}d\tau d\theta\\
&\leq \exp(Dr)\\
&\leq \exp(\frac{D}{2}(1+r^{2}))\\
\end{split}\nonumber
\end{equation}
where the constant $D$ only depends on $C$ and $n.$ Therefore
$$vol_{\hat{g}}B(x,r)\leq \exp(\frac{D}{2}(1+r^{2})).$$
We also estimate
$$\sup_{M^{n}\times[0,T_{1}]}|\lambda|\leq \mu^{2}+|\mu|(n+C)<\infty$$
and $$\sup_{M^{n}\times[0,T_{1}]}|\vec{\Lambda}|\leq 2|\mu|,$$
as well as $$\sup_{M^{n}\times[0,T_{1}]}|\frac{d}{dt}\hat{g}_{ij}|=0.$$
Now note that
\begin{equation}
\int_{0}^{T_{1}}\int_{M^{n}}\exp(-r(x)^{2})|\hat{\nabla}S|^{2}(x)d\mu_{\hat{g}}dt<\infty,\nonumber
\end{equation}
since $|\hat{\nabla}S|=|\mu e^{\mu r}f\hat{\nabla}r+e^{\mu r}\hat{\nabla}f|\leq C(C_{1},\mu)e^{\mu r}.$
From Theorem 4.3 in \cite{EH} we obtain finally
$S\leq0$ on $M^{n}\times[0,T_{1}].$ Therefore, if we let $\tilde{T}=T_{1},$ we have
$$f(x,t)\leq 2Lte^{-\mu r(x)},$$ for all $(x,t)\in M \times [0,\tilde{T}].$
\end{proof}

\begin{lemm}\label{finallemma}
Let $g(x,t)=s(x,t)+\hat{g}\in\mathbf{B}_{\alpha,\epsilon}+\hat{g},$ and $V$ is the unique positive solution we get in Lemma \ref{uniquev}. Then the initial problem
\begin{equation}\label{uniqueG}
\left\{
\begin{array}{ll}
    \frac{\partial}{\partial t} G_{ij}=g^{kl}\hat{\nabla}_{k}\hat{\nabla}_{l}G_{ij}+H_{ij}(\hat{R}m,\hat{\nabla}g,g,V^{-1}\nabla_{g}^{2}V),~~~on~ M^n\times[0,T]\\
    G_{ij}(\cdot,0)=\hat{g}_{ij}.\\
\end{array}
\right.
\end{equation}
has a solution and
\begin{equation}
e^{2r}(G-\hat{g})\in C^{2+\alpha,1+\frac{\alpha}{2}}(M^n\times[0,T]).\nonumber
\end{equation}
Moreover, there exists $T_{0}=T_{0}(n,C,\epsilon)\in(0,T]$ such that $G\in\mathbf{B}_{\alpha,\epsilon}+\hat{g}$ for $t\in[0,T_{0}].$
\end{lemm}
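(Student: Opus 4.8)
The plan is to build the solution exactly as in the proof of Lemma~\ref{uniquev} (exhaustion by the $D_k$, interior Schauder estimates, and an Arzela--Ascoli limit), the one genuinely new ingredient being a sharp decay estimate for the inhomogeneous term. The engine of everything is the claim that $H_{ij}(\hat{R}m,\hat\nabla g,g,V^{-1}\nabla_g^2V)$ decays like $e^{-2r}$. The point is that $H_{ij}$ collects all the non-Laplacian terms on the right of (\ref{mainequation}), and when one evaluates it at $g=\hat g,\ V=\hat V$ the quadratic $\hat\nabla g$ terms and the first-order $V$-term vanish (since $\hat\nabla\hat g=0$ and the DeTurck field $W^k=g^{ij}(\Gamma_{ij}^k-\hat\Gamma_{ij}^k)$ vanishes there), leaving precisely
\[
H_{ij}\big|_{g=\hat g,V=\hat V}=-2\big(Ric(\hat g)+n\hat g-\hat V^{-1}\nabla_{\hat g}^2\hat V\big)_{ij},
\]
which is $O(e^{-2r})$ by Proposition~\ref{equivalent}(2). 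The remaining contributions are differences governed by $s=g-\hat g$ and by $V-\hat V$: one has $s=O(e^{-2r})$ and $\hat\nabla s=O(e^{-2r})$ because $s\in\mathbf B_{\alpha,\epsilon}$, while Lemma~\ref{uniquev} gives $V-\hat V=O(e^{-r})$ with two $\hat g$-derivatives of the same order, and together with $V=O(e^{r})$, $V^{-1}=O(e^{-r})$ and $V^{-1}\nabla_g^2V=O(1)$ this makes every correction $O(e^{-2r})$ (the purely quadratic $\hat\nabla g$ terms are in fact $O(e^{-4r})$). Using Proposition~\ref{equivalent}(3) to control the H\"{o}lder and first-derivative norms of the leading combination, I would conclude $\|H\|_{C^{\alpha,\frac{\alpha}{2}}(B(x,2)\times[0,T])}\le C e^{-2r(x)}$.

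With the source estimate in hand, the existence and the regularity statement $e^{2r}(G-\hat g)\in C^{2+\alpha,1+\frac{\alpha}{2}}(M^n\times[0,T])$ follow the exhaustion scheme. On each $D_k\times[0,T]$ I would solve the linear, uniformly parabolic (because $\|s\|_{\mathbf B_\alpha}\le\epsilon\ll1$) problem (\ref{uniqueG}) with boundary value $G_{ij}=\hat g_{ij}$ on $\Gamma_k$; standard linear parabolic theory produces $G_k\in C^{2+\alpha,1+\frac{\alpha}{2}}(\bar D_k\times[0,T])$. An interior Schauder estimate on $B(x,1)$ bounds $\|G_k-\hat g\|_{C^{2+\alpha,1+\frac{\alpha}{2}}(B(x,1)\times[0,T])}$ by $\|H\|_{C^{\alpha,\frac{\alpha}{2}}(B(x,2)\times[0,T])}$ plus $\|G_k-\hat g\|_{C^{0}(B(x,2)\times[0,T])}$; the last factor I would control uniformly in $k$ by the barrier $2Lt\,e^{-2r}$, which for $n\ge3$, small $t$ and $g$ close to $\hat g$ is a supersolution of $\partial_t-g^{kl}\hat\nabla_k\hat\nabla_l$ and vanishes on $\Gamma_k$. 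This yields a uniform-in-$k$ bound $\le Ce^{-2r(x)}$, and Arzela--Ascoli together with a diagonal argument produces a solution $G$ on $M^n\times[0,T]$ with $e^{2r}(G-\hat g)\in C^{2+\alpha,1+\frac{\alpha}{2}}$, which is property~(1).

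It then remains to verify the self-mapping property $G\in\mathbf B_{\alpha,\epsilon}+\hat g$ for short time. Writing the equation for $G-\hat g$ as a scalar differential inequality for its $\hat g$-norm and absorbing the discrepancy $(g^{kl}-\hat g^{kl})\hat\nabla_k\hat\nabla_l$ into the source (which the bounds of the previous step render $O(e^{-4r})$), I would invoke the Ecker--Huisken maximum principle, Proposition~\ref{mainprop} with $\mu=2$, whose hypotheses are met by the estimates just obtained, to get the sharpened time-weighted bound $|G-\hat g|\le 2Lt\,e^{-2r}$ on $M^n\times[0,\tilde T]$, i.e. $\|e^{2r}(G-\hat g)\|_{C^0(M^n\times[0,t])}\le 2Lt$. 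Combining this $C^0$ smallness with the uniform $C^{2+\alpha,1+\frac{\alpha}{2}}$ bound from the previous step by interpolation, and using $G-\hat g=0$ at $t=0$ to bound the time-H\"{o}lder seminorm over $[0,T_0]$ by $T_0^{1-\frac{\alpha}{2}}$ times that uniform bound, I would show $\|G-\hat g\|_{\mathbf B_\alpha}\to0$ as $T_0\to0$. Choosing $T_0=T_0(n,C,\epsilon)$ so small that $\|G-\hat g\|_{\mathbf B_\alpha}\le\epsilon$ completes the argument.

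The main obstacle is twofold. First, the decay $\|H\|_{C^{\alpha,\frac{\alpha}{2}}}\le Ce^{-2r}$ is not termwise obvious: the curvature piece and the $V$-Hessian piece are each only $O(1)$, and the decay surfaces solely through the static cancellation in Proposition~\ref{equivalent}(2), so the bookkeeping must be organized to expose that cancellation first and then to estimate every correction using only $s=O(e^{-2r})$ and the weaker $V-\hat V=O(e^{-r})$ from Lemma~\ref{uniquev}. Second, and more delicate, is upgrading the $C^0$ smallness to smallness in the full weighted norm $\|\cdot\|_{\mathbf B_\alpha}$: neither the weight $e^{2r}$ nor the spatial H\"{o}lder seminorm shrinks with $t$ on its own, so the required smallness can only be extracted by interpolating the maximum-principle bound against the uniform higher-order estimate, and it is precisely this interplay that fixes the admissible $T_0$.
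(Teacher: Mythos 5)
Your proposal is correct and follows essentially the same route as the paper: the same decomposition of $H_{ij}$ around $(\hat g,\hat V)$ to expose the cancellation $\hat{R}ic+n\hat g-\hat V^{-1}\hat\nabla^2\hat V=O(e^{-2r})$ from Proposition \ref{equivalent}, the same exhaustion-plus-Schauder-plus-Arzela--Ascoli construction giving the weighted $C^{2+\alpha,1+\frac{\alpha}{2}}$ bound, the same application of Proposition \ref{mainprop} to $f=\|G-\hat g\|_{\hat g}$ for the bound $L_2te^{-2r}$, and the same interpolation between the $C^0$ and $C^{2+\alpha,1+\frac{\alpha}{2}}$ estimates (at scale $\delta=\sqrt{T}$) to secure $\|G-\hat g\|_{\mathbf B_\alpha}\le\epsilon$ for small $T_0$.
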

\begin{proof}
For any $x\in M^n,$ there exists $k$ such that $x\in B(x,1)\subset B(x,2)\subset D_{k}.$
We solve the following linear initial-boundary value problem
\begin{equation}\label{inibouG}
\left\{
\begin{array}{ll}
    \frac{\partial}{\partial t} G_{ij}=g^{kl}\hat{\nabla}_{k}\hat{\nabla}_{l}G_{ij}+H_{ij}(\hat{R}m,\hat{\nabla}g,g,V^{-1}\nabla_{g}^{2}V),~~~on~ D_{k}\times[0,T]\\
    G_{ij}\mid_{\Gamma_k}=\hat{g}_{ij}\\
\end{array}
\right.
\end{equation}
where
\begin{equation}\label{Hij}
\begin{split}
H_{ij}&=-g^{kl}g_{ip}\hat{g}^{pq}\hat{R}_{jkql}-g^{kl}g_{jp}\hat{g}^{pq}\hat{R}_{ikql}+g^{kl}g^{pq}(\frac{1}{2}\hat{\nabla}_{i}g_{pk}\cdot\hat{\nabla}_{j}g_{ql}+\\
    &\hat{\nabla}_{k}g_{jp}\cdot\hat{\nabla}_{q}g_{il}-\hat{\nabla}_{k}g_{jp}\cdot\hat{\nabla}_{l}g_{iq}-\hat{\nabla}_{j}g_{pk}\cdot\hat{\nabla}_{l}g_{iq}-\hat{\nabla}_{i}g_{pk}\cdot\hat{\nabla}_{l}g_{jq})\\
    &-g^{kl}(\hat{\nabla}_{i}g_{jl}+\hat{\nabla}_{j}g_{il}-\hat{\nabla}_{l}g_{ij})\cdot V^{-1}\hat{\nabla}_{k}V+2V^{-1}\hat{\nabla}_{i}\hat{\nabla}_{j}V-2ng_{ij}.\\
\end{split}\nonumber
\end{equation}
Correspondingly, $q=G-\hat{g}$ satisfies
\begin{equation}\label{q}
\left\{
\begin{array}{ll}
    \frac{\partial}{\partial t} q_{ij}=g^{kl}\hat{\nabla}_{k}\hat{\nabla}_{l}q_{ij}+H_{ij}(\hat{R}m,\hat{\nabla}g,g,V^{-1}\nabla_{g}^{2}V),~~~on~ D_{k}\times[0,T]\\
    q_{ij}\mid_{\Gamma_k}=0.\\
\end{array}
\right.\nonumber
\end{equation}
We rewrite $H$ and find that $H$ is a combination of the following terms:\\
$\hat{R}ic+n\hat{g}-\hat{V}^{-1}(\hat{\nabla}^{2}\hat{V}),$ $s,$ $s*\hat{R}m,$ $s*s*\hat{R}m,$
$V^{-1}\hat{\nabla}s*\hat{\nabla}V,$ $V^{-1}*s*\hat{\nabla}s*\hat{\nabla}V,$ $s*\hat{\nabla}s*\hat{\nabla}s,$
$s*s*\hat{\nabla}s*\hat{\nabla}s,$ $(V\hat{V})^{-1}(V-\hat{V})\hat{\nabla}^{2}\hat{V},$ and $V^{-1}\hat{\nabla}^{2}(V-\hat{V}).$\\
By the assumption that $s\in\mathbf{B}_{\alpha,\epsilon}$ and $V$ comes from the unique positive solution in Lemma \ref{uniquev}, we know that $g^{kl}$ and $H_{ij}$ belong to the class $C^{\alpha,\frac{\alpha}{2}}(\bar{D}_{k}\times[0,T]),$ from the standard parabolic theory, we know linear initial-boundary value problem (\ref{inibouG}) has a unique solution $G^{k}_{ij}$ from the class $C^{2+\alpha,1+\frac{\alpha}{2}}(\bar{D}_{k}\times[0,T]),$ and it satisfies
\begin{equation}\label{Gkestimate}
\|G^{k}_{ij}\|_{C^{2+\alpha,1+\frac{\alpha}{2}}(D_{k}\times[0,T])}\leq C(\|\hat{g}\|_{C^{2+\alpha}(D_{k})}+\|H_{ij}\|_{C^{\alpha,\frac{\alpha}{2}}(D_{k}\times[0,T])})\leq C
\end{equation}
and
\begin{equation}\label{Gminushatgestimate}
 \|G^{k}_{ij}-\hat{g}_{ij}\|_{C^{2+\alpha,1+\frac{\alpha}{2}}(B(x,1)\times[0,T])}\leq C(\|H_{ij}\|_{C^{\alpha,\frac{\alpha}{2}}(B(x,2)\times[0,T])}+|G^{k}_{ij}-\hat{g}_{ij}|_{\Gamma_{k}})
\end{equation}
Like what we did above, as the right hand side of (\ref{Gkestimate}) is independent of $k,$ we can take the limit $k\longrightarrow\infty$ and get the convergence of a subsequence of the solutions $G_{ij}^{k}$ in the $C^{2+\beta,1+\frac{\beta}{2}}(\beta<\alpha)$ topology on compact subsets of $M^n\times [0,T]$ to a $C^{2+\alpha,1+\frac{\alpha}{2}}$ solution $G_{ij}$ on $M^n\times [0,T]$ by the theorem of Arzela-Ascoli for given $s(x,t)\in\mathbf{B}_{\alpha,\epsilon}.$\\
From (\ref{Gminushatgestimate}), we obtain
\begin{equation}\label{Gminushatg}
 \|G_{ij}-\hat{g}_{ij}\|_{C^{2+\alpha,1+\frac{\alpha}{2}}(B(x,1)\times[0,T])}\leq C\|H_{ij}\|_{C^{\alpha,\frac{\alpha}{2}}(B(x,2)\times[0,T])}\nonumber
\end{equation}
When $y\in B(x,2),$ $r(x)$ and $r(y)$ is equivalent, thus
\begin{equation}\label{g2alpha}
\|G-\hat{g}\|_{C^{2+\alpha,1+\frac{\alpha}{2}}(B(x,1)\times[0,T])}\leq L_{1}e^{-2r(x)},~~~t\in[0,T].
\end{equation}
Let $f=\|G-\hat{g}\|_{\hat{g}},$ by direct computation, $f$ satisfies
\begin{equation}\label{f}
\begin{split}
\frac{\p f}{\p t}&\leq g^{kl}\hat{\nabla}_{k}\hat{\nabla}_{l}f+F\\
&\leq \hat{\Delta}f+(g^{kl}-\hat{g}^{kl})\hat{\nabla}_{k}\hat{\nabla}_{l}f+F\\
&= \hat{\Delta}f+Q
\end{split}\nonumber
\end{equation}
where
\begin{equation}
\begin{split}
Q(x,t) &\leq C (\|H\|_{0;B(x,1)\times[0,T]}+\|s\|_{0;B(x,1)\times[0,T]}\cdot\|\nabla^{2}(G-\hat{g})\|_{0;B(x,1)\times[0,T]})\\
&\leq Ce^{-2r(x)}.\\
\end{split}\nonumber
\end{equation}
on $M^n\times[0,T].$
By applying Proposition \ref{mainprop}, there exists $T_1\in(0,T]$ which only depends on $n,$ $a,$ and $AS$ constant such that
\begin{equation}\label{Gmax}
\|G-\hat{g}\|(x,t)\leq L_{2}te^{-2r(x)}.
\end{equation}
According to Proposition \ref{mainprop}, if we choose $T$ to be small at the beginning, $T_{1}$ could reach $T.$ Without loss of generality, we may let $T_1=T.$\\
Now observe that once we have (\ref{g2alpha}) and (\ref{Gmax}), using Interpolation Inequality(see \cite{Lady} P80 Lemma3.2), we obtain that for any $\delta\in(0,\sqrt{T}],$ there exist constants $c_1,$ $c_2,$ $d_1,$ and $d_2,$ which only depend on $\alpha$ and $n,$ such that
\begin{equation}\label{alpha}
\begin{split}
&\|G-\hat{g}\|_{C^{\alpha,\frac{\alpha}{2}}(B(x,1)\times(0,T])}\\
\leq&c_1\delta^2 \|G-\hat{g}\|_{C^{2+\alpha,1+\frac{\alpha}{2}}(B(x,1)\times(0,T])}+c_2\delta^{-\alpha}\|G-\hat{g}\|_{0;B(x,1)\times(0,T]},\\
\end{split}
\end{equation}
and
\begin{equation}\label{oneplusalpha}
\begin{split}
&\|G-\hat{g}\|_{C^{1+\alpha,\frac{1+\alpha}{2}}(B(x,1)\times(0,T])}\\
\leq&d_{1}\delta \|G-\hat{g}\|_{C^{2+\alpha,1+\frac{\alpha}{2}}(B(x,1)\times(0,T])}+d_{2}\delta^{-(1+\alpha)}\|G-\hat{g}\|_{0;B(x,1)\times(0,T]}.\\
\end{split}
\end{equation}
Let $\delta=\sqrt{T},$ then we can always choose $T=T_{0}$ such that both (\ref{alpha}) and (\ref{oneplusalpha}) are no greater than $\frac{\epsilon}{2}e^{-2r},$ which means
\begin{equation}
\begin{split}
\|G-\hat{g}\|_{\mathbf{B}_{\alpha}}&=\|e^{2r}(G-\hat{g})\|_{C^{\alpha,\frac{\alpha}{2}}(M^n\times[0,T_{0}]; T^{*}M\otimes T^{*}M)}\\
&+\|e^{2r}\hat{\nabla}(G-\hat{g})\|_{C^{\alpha,\frac{\alpha}{2}}(M^n\times[0,T_{0}]; T^{*}M\otimes T^{*}M)}\\
&\leq \frac{\epsilon}{2}+\frac{\epsilon}{2}=\epsilon.
\end{split}\nonumber
\end{equation}
We therefore conclude $G\in\mathbf{B}_{\alpha,\epsilon}+\hat{g}$ when $t\in[0,T_{0}].$
\end{proof}
\begin{lemm}\label{weiyixing}
Let $G$ be what we obtained in the above lemma. Then $G$ is unique in $\mathbf{B}_{\alpha,\epsilon}+\hat{g}$ for $t\in [0,T_{0}].$
\end{lemm}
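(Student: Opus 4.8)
The plan is to prove uniqueness by a direct contradiction argument combined with the maximum principle from Proposition~\ref{mainprop}, exploiting the linear structure of the $G$-equation. Suppose $G_1$ and $G_2$ are two solutions in $\mathbf{B}_{\alpha,\epsilon}+\hat{g}$ arising from the same $s\in\mathbf{B}_{\alpha,\epsilon}$. The essential observation is that, for fixed $s$ (hence fixed $g=s+\hat g$ and fixed $V$ by the uniqueness in Lemma~\ref{uniquev}), the equation (\ref{uniqueG}) is a \emph{linear} inhomogeneous parabolic equation for $G$: the inhomogeneous term $H_{ij}$ depends only on $g$, $\hat\nabla g$, $\hat{R}m$, and $V^{-1}\nabla_g^2 V$, none of which involve $G$ itself, and the leading operator $g^{kl}\hat\nabla_k\hat\nabla_l$ also depends only on $g$. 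Therefore the difference $P_{ij}:=(G_1)_{ij}-(G_2)_{ij}$ satisfies the homogeneous problem
\begin{equation}
\frac{\partial}{\partial t}P_{ij}=g^{kl}\hat{\nabla}_k\hat{\nabla}_l P_{ij},\qquad P_{ij}(\cdot,0)=0,\nonumber
\end{equation}
on $M^n\times[0,T_0]$, since both $G_1$ and $G_2$ take the initial value $\hat{g}_{ij}$.

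Next I would reduce the tensorial problem to a scalar one suitable for Proposition~\ref{mainprop}. Setting $f:=\|P\|_{\hat{g}}=\|G_1-G_2\|_{\hat{g}}$, a direct computation of the sort already used to derive (\ref{f}) in Lemma~\ref{finallemma} shows that $f$ satisfies a differential inequality of the form
\begin{equation}
\frac{\partial f}{\partial t}\leq \hat{\Delta}f+(g^{kl}-\hat{g}^{kl})\hat{\nabla}_k\hat{\nabla}_l f,\nonumber
\end{equation}
where the error term $(g^{kl}-\hat g^{kl})\hat\nabla_k\hat\nabla_l f$ is controlled because $g^{kl}-\hat g^{kl}=O(\epsilon e^{-2r})$ by $s\in\mathbf{B}_{\alpha,\epsilon}$ and the second derivatives of $f$ are controlled by membership of $G_1,G_2$ in $\mathbf{B}_{\alpha,\epsilon}+\hat g$. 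Crucially, since $P$ solves a homogeneous equation with zero initial data, the forcing term $Q$ here can be taken to satisfy $|Q(x,t)|\le L e^{-\mu r(x)}$ for a suitable $\mu\geq 2$, with $L$ depending only on $\epsilon$ and the $AH$ constants; in fact, because there is no genuine inhomogeneity, one expects the relevant $Q$ to be bounded by the decay of $f$ itself, so one may apply Proposition~\ref{mainprop} with $L$ as small as one wishes. Applying Proposition~\ref{mainprop} with $f(x,0)\le 0$ yields $f(x,t)\le 2Lt\,e^{-\mu r(x)}$ on $M^n\times[0,\tilde T]$.

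To close the argument I would run a standard continuation/iteration step. Since $L$ can be chosen arbitrarily small (or, more carefully, since the bound $f\le 2Lt e^{-\mu r}$ feeds back into the estimate for $Q$ and hence for $L$ through a Gr\"onwall-type absorption), letting $L\to 0$ forces $f\equiv 0$ on $M^n\times[0,\tilde T]$, i.e.\ $G_1=G_2$ there. One then repeats the argument on $[\tilde T,2\tilde T]$ with initial data at $\tilde T$, and since $\tilde T$ depends only on $\mu,n$ and the $AS$ constant (and not on the time origin), finitely many steps cover all of $[0,T_0]$, giving $G_1=G_2$ on $M^n\times[0,T_0]$.

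The main obstacle I anticipate is the justification that the scalar inequality for $f$ fits the hypotheses of Proposition~\ref{mainprop} with a forcing term that genuinely decays and whose constant $L$ can be driven to zero. Since the $P$-equation is homogeneous, the ``inhomogeneity'' $Q$ for $f$ is really an error coming from commuting the norm past the Laplacian and from the $(g^{kl}-\hat g^{kl})$ term; one must verify that this error is pointwise bounded by $C\epsilon f$ (so it can be absorbed) rather than by a fixed nonzero function, and that the integrability condition $\int_0^{T_1}\!\int_{M^n}e^{-r^2}|\hat\nabla S|^2\,d\mu_{\hat g}\,dt<\infty$ required by Theorem~4.3 of \cite{EH} holds for $S=e^{\mu r}f-2Lt$. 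The former is where the smallness $\epsilon\ll 1$ is essential, and the latter follows from the uniform gradient bounds on $G_1,G_2$ in $\mathbf{B}_{\alpha,\epsilon}+\hat g$ exactly as in the proof of Proposition~\ref{mainprop}.
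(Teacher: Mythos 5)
Your reduction to the homogeneous problem for $P=G_1-G_2$ and the passage to the scalar $f=\|P\|_{\hat g}$ via Kato's inequality are fine, but the step where you apply Proposition~\ref{mainprop} does not close, and you have in fact correctly identified the fatal point yourself without resolving it. The ``forcing'' term $Q=(g^{kl}-\hat g^{kl})\hat\nabla_k\hat\nabla_l f$ is \emph{not} bounded by $C\epsilon f$: a function can vanish at a point while its Hessian does not, and $\hat\nabla^2\|P\|_{\hat g}$ is not even bounded near the zero set of $P$ (which is precisely where you are trying to work). The best available pointwise bound is $|Q|\le \|s\|_{\hat g}\cdot\|\hat\nabla^2 P\|_{\hat g}\le C\epsilon e^{-4r}$, where the constant $C$ is the fixed $C^{2}$ bound coming from $G_1,G_2\in\mathbf{B}_{\alpha,\epsilon}+\hat g$ and the Schauder estimates of Lemma~\ref{finallemma}. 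This $L=C\epsilon$ is a fixed, generally nonzero number; it is not a free parameter you may send to zero. Nor does the Gr\"onwall/iteration scheme rescue you: Proposition~\ref{mainprop} returns only a $C^0$ bound $\|P\|_0\le 2Lte^{-\mu r}$, whereas $Q$ involves $\hat\nabla^2 P$, so to feed the improvement back you must interpolate against the fixed $C^{2+\alpha}$ bound; the resulting recursion has the form $A_{k+1}\le C'A_k^{\theta}$ with $\theta=\alpha/(2+\alpha)<1$, whose fixed point is a positive constant, not zero. So the argument proves a quantitative closeness of $G_1$ and $G_2$ but not their equality.

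The paper takes a genuinely different route that avoids second derivatives of $f$ altogether: it keeps the full variable-coefficient inequality $\partial_t u\le g^{kl}\hat\nabla_k\hat\nabla_l u$ for $u=\|G_1-G_2\|_{\hat g}$ and runs a weighted $L^2$ energy argument in the style of Karp--Li, multiplying by $\varphi_s^2e^{h}u$ with the Gaussian weight $h=-r^2/4(2\sigma-t)$ (which satisfies $|\hat\nabla h|^2+\partial_t h=0$) and a cutoff $\varphi_s$, then integrating by parts. After integration by parts only \emph{first} derivatives of $u$ appear, and these are controlled by $|u|,|\hat\nabla u|\le C\epsilon e^{-2r}$ from membership in $\mathbf{B}_{\alpha,\epsilon}$; the boundary terms vanish as $s\to\infty$ because of the Gaussian weight and the exponential decay, yielding $u\equiv 0$. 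If you want to keep a pointwise maximum-principle flavor, you would need an argument that never differentiates the norm twice --- for instance applying a tensor maximum principle to $P$ directly, or regularizing with $f_\delta=(\|P\|^2+\delta^2)^{1/2}$ and tracking the error --- but as written your proof has a genuine gap at the absorption step.
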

\begin{proof}
Suppose that $G_1$ and $G_{2}$ are two solutions to the initial problem (\ref{uniqueG}), and that both $G_1$ and $G_{2}$ satisfy
\begin{equation}
e^{2r}(G_{i}-\hat{g})\in C^{2+\alpha,1+\frac{\alpha}{2}}(M^n\times[0,T])\nonumber
\end{equation}
and $G_{i}\in\mathbf{B}_{\alpha,\epsilon}+\hat{g}$ for $t\in[0,T_{0}],$ $i=1,2.$\\
Let $u(x,t):=\|G_{1}-G_{2}\|_{\hat{g}}(x,t),$ then $u$ satisfies
$$\frac{\p u}{\p t}\leq g^{kl}\hat{\nabla}_{k}\hat{\nabla}_{l}u$$ with $u(x,0)=0.$ Define $h(x,t):=-\frac{r(x)^2}{4(2\sigma-t)},$ where $\sigma$ is an arbitrary time in $[0,T_{0}]$. Note that $h$ is a locally Lipschitz function defined on $M^{n}\times[0,2\sigma).$ Since $|\hat{\nabla}r|_{\hat{g}}=1,$ it follows that
\begin{equation}\label{heatkernal}
|\hat{\nabla}h|^{2}_{\hat{g}}+\frac{\p h}{\p t}=0,~~~a.e.~~
\end{equation}
Let $0\leq\varphi_{s}\leq1$ be a cutoff function which is $1$ inside $B(o,s)$ and compactly supported in $B(o,s+1)$ with $|\hat{\nabla}\varphi_{s}|_{\hat{g}}\leq2.$ Multiplying the inequality $\frac{\p u}{\p t}-g^{kl}\hat{\nabla}_{k}\hat{\nabla}_{l}u\leq0$ by the compactly supported Lipschitz function $\varphi_{s}^{2}e^{h}u$ and integrating by parts, we have
\begin{equation}
\begin{split}
0\geq&\int_{0}^{\sigma}\int_{M^n}e^{h}(\varphi_{s}^{2}|\hat{\nabla}u|^{2}_{g}+2\langle\hat{\nabla}\varphi_{s},\hat{\nabla}u\rangle_{g}\varphi_{s}u+\varphi_{s}^{2}u\langle\hat{\nabla}h,\hat{\nabla}u\rangle_{g})d\mu_{\hat{g}}dt\\
 & +\int_{0}^{\sigma}\int_{M^n}e^{h}\varphi_{s}^{2}u(\hat{\nabla}_{k}g^{kl}\cdot\hat{\nabla}_{l}u)d\mu_{\hat{g}}dt+\frac{1}{2}\int_{0}^{\sigma}\int_{M^n}e^{h}\varphi_{s}^{2}(\frac{\p}{\p t}u^2)d\mu_{\hat{g}}dt\\
 \geq&\int_{0}^{\sigma}\int_{M^n}e^{h}(\varphi_{s}^{2}|\hat{\nabla}u|^{2}_{g}-\frac{1}{\lambda^2}u^2|\hat{\nabla}\varphi_{s}|_{g}^{2}-\lambda^{2}\varphi_{s}^{2}|\hat{\nabla}u|_{g}^{2})d\mu_{\hat{g}}dt\\
 &+\int_{0}^{\sigma}\int_{M^n}e^{h}(-\frac{1}{2}\varphi_{s}^{2}u^{2}|\hat{\nabla}h|_{g}^{2}-\frac{1}{2}\varphi_{s}^{2}|\hat{\nabla}u|_{g}^{2})d\mu_{\hat{g}}dt\\
 &+\int_{0}^{\sigma}\int_{M^n}e^{h}\varphi_{s}^{2}u(\hat{\nabla}_{k}g^{kl}\cdot\hat{\nabla}_{l}u)d\mu_{\hat{g}}dt\\
 &+\frac{1}{2}\int_{M^n}e^{h}\varphi_{s}^{2}u^{2}d\mu_{\hat{g}}\mid_{0}^{\sigma}+\frac{1}{2}\int_{0}^{\sigma}\int_{M^n}e^{h}\varphi_{s}^{2}u^{2}|\hat{\nabla}h|^{2}_{\hat{g}}d\mu_{\hat{g}}dt.\\
\end{split}\nonumber
\end{equation}
Here we used the Cauchy-Schwarz inequality and the equality (\ref{heatkernal}). Then
\begin{equation}
\begin{split}
&(\int_{M^n}e^{h}\varphi_{s}^{2}u^{2}d\mu_{\hat{g}})(\sigma)\leq\frac{2}{\lambda^2}\int_{0}^{\sigma}\int_{M^n}e^{h}u^2|\hat{\nabla}\varphi_{s}|_{g}^{2}d\mu_{\hat{g}}dt\\ &+\int_{0}^{\sigma}\int_{M^n}e^{h}\varphi_{s}^{2}\{|\hat{\nabla}u|^{2}_{g}(2\lambda^{2}-1)-2u(\hat{\nabla}_{k}g^{kl}\cdot\hat{\nabla}_{l}u)+u^{2}(|\hat{\nabla}h|^{2}_{g}-|\hat{\nabla}h|^{2}_{\hat{g}})\}d\mu_{\hat{g}}dt.\\
\end{split}\nonumber
\end{equation}
By the assumption and all the estimates in the above lemma, we know that $g-\hat{g}\in\mathbf{B}_{\alpha,\epsilon}$ and $G_{1},G_{2}\in\mathbf{B}_{\alpha,\epsilon}+\hat{g}.$ Then
\begin{equation}
u=\|G_{1}-G_{2}\|_{\hat{g}}\leq \|G_{1}-\hat{g}\|_{\hat{g}}+\|G_{2}-\hat{g}\|_{\hat{g}}\leq 2\epsilon e^{-2r},\nonumber
\end{equation}
Due to Kato's inequality, for a smooth tensor field $W$ with compact support, $\mid\nabla|W|\mid\leq\mid\nabla W\mid,$ we get
\begin{equation}
\|\hat{\nabla}u\|_{\hat{g}}=\|\hat{\nabla}\|G_{1}-G_{2}\|_{\hat{g}}\|_{\hat{g}}\leq\|\hat{\nabla}(G_{1}-G_{2})\|_{\hat{g}}\leq 2\epsilon e^{-2r}.\nonumber
\end{equation}
Hence
\begin{equation}
|\hat{\nabla}u|_{g}\leq\|g-\hat{g}\|_{\hat{g}}\cdot\|\hat{\nabla}u\|_{\hat{g}}+\|\hat{\nabla}u\|_{\hat{g}}\leq 4\epsilon e^{-2r}.\nonumber
\end{equation}
As
\begin{equation}
|u^{2}(|\hat{\nabla}h|^{2}_{g}-|\hat{\nabla}h|^{2}_{\hat{g}})|\leq u^{2}\cdot\|g-\hat{g}\|_{\hat{g}}\cdot|\hat{\nabla}h|^{2}_{\hat{g}},\nonumber
\end{equation}
and the integration domain of time is from $0$ to $\sigma,$ so
\begin{equation}
|u^{2}(|\hat{\nabla}h|^{2}_{g}-|\hat{\nabla}h|^{2}_{\hat{g}})|\leq \frac{1}{(2\sigma-t)^2}\epsilon^{3}r^{2}e^{-6r}\leq\frac{\epsilon^{3}}{\sigma^{2}}e^{-4r}.\nonumber
\end{equation}
Since $\epsilon$ is small, we can always choose $\lambda$ such that
\begin{equation}
\begin{split}
&|\hat{\nabla}u|^{2}_{g}(2\lambda^{2}-1)-2u(\hat{\nabla}_{k}g^{kl}\cdot\hat{\nabla}_{l}u)+u^{2}(|\hat{\nabla}h|^{2}_{g}-|\hat{\nabla}h|^{2}_{\hat{g}})\}\\
\leq &\{16(2\lambda^{2}-1)+8\epsilon+\frac{\epsilon}{\sigma^{2}}\}\epsilon^{2}e^{-4r}\\
\leq &0.
\end{split}\nonumber
\end{equation}
Then
\begin{equation}
\begin{split}
(\int_{M^n}e^{h}\varphi_{s}^{2}u^{2}d\mu_{\hat{g}})(\sigma)&\leq\frac{2}{\lambda^2}\int_{0}^{\sigma}\int_{M^n}e^{h}u^2|\hat{\nabla}\varphi_{s}|_{g}^{2}d\mu_{\hat{g}}dt\\ &\leq \frac{32\epsilon^{3}}{\lambda^2}\int_{0}^{\sigma}\int_{B(o,s+1)\setminus B(o,s)}exp\{-\frac{1}{8\sigma}r^{2}-6r\}d\mu_{\hat{g}}dt.\\
\end{split}\nonumber
\end{equation}
The right hand side tends to zero as $s\rightarrow\infty,$ and we conclude that $u\equiv0$ on $M^n\times[0,\sigma].$ Since $\sigma$ is arbitrary in $[0,T_{0}],$ we conclude that $u\equiv0$ on $M^n\times[0,T_{0}].$ Thus the uniqueness is obtained.
\end{proof}
Finally, we arrive at the proof of our main result.
\begin{proof}[Proof of Theorem \ref{mainresult}]
For $t\in [0,T_{0}],$ and for any $s\in\mathbf{B}_{\alpha,\epsilon},$ let $g=\hat{g}+s,$ by Lemma \ref{uniquev}, we get a unique positive $V$ with good estimates. Plugging $g$ and $V$ in equation (\ref{uniqueG}), from Lemma \ref{finallemma}, we get a solution $G.$ Now we find that the operator $$\mathbf{A}: \mathbf{B}_{\alpha,\epsilon}\longrightarrow \mathbf{B}_{\alpha}$$
$$\mathbf{A}(s)= G-\hat{g}$$
has the range in $\mathbf{B}_{\alpha,\epsilon}$ by Lemma \ref{finallemma} and is well-defined in $\mathbf{B}_{\alpha,\epsilon}$ by Lemma \ref{weiyixing}. Lemma \ref{finallemma} also shows that $\mathbf{A}$ is compact in $\mathbf{B}_{\alpha,\epsilon}.$ Then Schauder fixed point theorem promises us a fixed point $s_{0}\in\mathbf{B}_{\alpha,\epsilon}$ such that $$\mathbf{A}(s_{0})=s_{0}.$$ Let $g=s_{0}+\hat{g},$ and we denote the unique solution from Lemma \ref{uniquev} by $V.$ Obviously, $(g,V)$ are solutions to our static flow (\ref{mainequation}) in $C^{2+\alpha,1+\frac{\alpha}{2}}(M^n\times[0,T_0])$. Then we can improve the spatial regularity step by step, and by bootstrapping and the equation we can improve the regularity in time as well, hence we get a smooth solution. Since $(g,V)$ is a pullback of the solution to the original static flow (\ref{originalequation}) by a specific diffeomorphism, we get the short-time existence of the static flow (\ref{originalequation}).
\end{proof}

\section{Asymptotic expansions at conformal infinity}
In this section, we mainly prove Theorem \ref{expansion}. Suppose the triple $(M^n,g,V)$ is as that of Theorem \ref{expansion}. We also assume that the metric $g$ is sufficiently regular at infinity, which means that it has asymptotic expansions to high enough order, in general involving log terms. We follow the methods used in \cite{G}.
\begin{lemm}\label{uniquelymnobigthann}
Suppose that $(M^n,g,V),$ $\hat{g}$ and $\tau$ are as that of Theorem \ref{expansion}. Then $\partial_{\tau}^{m}g_{\tau}|_{\tau=0}$ and $\partial_{\tau}^{(m-1)}V|_{\tau=0}$ can uniquely determine by $\hat{g}$ for any
$1\leq m\leq n-1.$
\end{lemm}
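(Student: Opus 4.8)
The plan is to put the two static vacuum equations (\ref{1})--(\ref{2}) into the geodesic normal form attached to $\tau$ and to read off from them a coupled system of second-order ODEs in $\tau$ for the pair $(g_\tau, V)$, which I then solve order by order by a Frobenius-type indicial analysis following Graham \cite{G}. The conceptual anchor is the identity already recorded in the introduction: (\ref{1})--(\ref{2}) are equivalent to $Ric(h)=-nh$ for the $(n+1)$-dimensional warped product $h=V^2d\theta^2+g$, so what is really being carried out is the Fefferman--Graham expansion of the Poincar\'e--Einstein metric $h$, done independently of $\theta$; the $M^n$-block of $h_\tau$ is exactly $g_\tau$ and the $d\theta^2$-coefficient is $(\tau V)^2$. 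It is convenient to replace $V$ by the bounded quantity $u:=\tau V$, which satisfies $u|_{\tau=0}=1$ by the hypothesis that $V$ grows like $1/\tau$; since $\partial_\tau^m(\tau V)|_{\tau=0}=m\,\partial_\tau^{m-1}V|_{\tau=0}$, determining $\partial_\tau^m u|_{\tau=0}$ is the same as determining $\partial_\tau^{m-1}V|_{\tau=0}$.

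First I would compute $Ric(g)$, $\nabla_g^2V$ and $\Delta_gV$ for $g=\tau^{-2}(d\tau^2+g_\tau)$ in the coordinate $\tau$, splitting every tensor equation into its $(\tau\tau)$, $(\tau i)$ and $(ij)$ parts. The tangential $(ij)$ part of $Ric(g)+ng=V^{-1}\nabla_g^2V$ yields a second-order evolution equation for $g_\tau$ whose principal part is the Euler operator $\tau^2\partial_\tau^2-(n-1)\tau\partial_\tau$, all remaining terms being built from $g_\tau$, $\partial_\tau g_\tau$, $Ric(g_\tau)$ and the coupling $V^{-1}\nabla_g^2V$; the scalar equation $\Delta_gV=nV$ yields a second-order equation for $V$, hence for $u$, of the same Euler type. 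The $(\tau\tau)$ and $(\tau i)$ components give first-order constraints which, as in the Einstein case, are preserved under the evolution; I would use them only to control the trace and divergence of the coefficients rather than to generate new data.

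Then comes the indicial analysis. Testing $\tau^s$ against the leading operators gives the indicial polynomial $s(s-n)$ for the $g_\tau$-perturbation, with roots $0$ and $n$, and $(s+1)(s-n)$ for $V$, with roots $-1$ and $n$; these are precisely the asymptotically hyperbolic exponents. The induction is joint in $(g_\tau,u)$: differentiating the two evolution equations $m$ times and evaluating at $\tau=0$, the coefficient multiplying $\partial_\tau^m g_\tau|_{\tau=0}$ is a nonzero multiple of $m(m-n)$, while the coefficient multiplying $\partial_\tau^m u|_{\tau=0}$ is a nonzero multiple of $m(m-n-1)$. The first is non-vanishing for $1\le m\le n-1$ and the second for $1\le m\le n$, so the joint system can be solved for the top coefficients exactly on the range $1\le m\le n-1$. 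Inspection of the equations shows that at order $m$ the right-hand sides involve only $\partial_\tau^j g_\tau|_{\tau=0}$ and $\partial_\tau^j u|_{\tau=0}$ with $j<m$, together with $\hat g$ and its curvature; feeding in the base data $g_\tau|_{\tau=0}=\hat g$ and $u|_{\tau=0}=1$ closes the induction and gives the claim for all $1\le m\le n-1$.

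The genuinely delicate points are twofold. First, the explicit curvature computation: one must work out the Hessian and Laplacian of $V$ in the normal frame and check that the coupling term $V^{-1}\nabla_g^2V$ contributes to the order-$m$ equation for $g_\tau$ only through coefficients of order $<m$, so that the top coefficient is really $m(m-n)$ and does not degenerate at an earlier stage. Second, the bookkeeping of the coupling: the unknowns must be ordered so that solving simultaneously for the $g$- and $V$-coefficients at order $m$ is not circular, i.e. one must verify that the mixing between the two equations at a given order is strictly lower triangular in the differentiation order. Once these are settled, the non-vanishing of $m(m-n)$ and $m(m-n-1)$ on $1\le m\le n-1$ finishes the argument.
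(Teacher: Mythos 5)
Your overall route is the same as the paper's: substitute $u=\tau V$, write the tangential $(ij)$ component of $Ric(g)+ng=V^{-1}\nabla_g^2V$ together with $\Delta_gV=nV$ in the normal form $g=\tau^{-2}(d\tau^2+g_\tau)$, and determine the Taylor coefficients of $(g_\tau,u)$ by induction on the order of differentiation. The gap is in your account of why the induction closes at each order. You assert that the coupling between the $g$-equation and the $u$-equation is ``strictly lower triangular in the differentiation order,'' so that the diagonal indicial coefficients $m(m-n)$ and $m(m-n-1)$ alone give solvability. That is not what happens: differentiating the two equations and setting $\tau=0$ yields
\begin{equation}
(m-n)\partial_{\tau}^{m}g_{ij}-\hat g^{kl}(\partial_{\tau}^{m}g_{kl})\,\hat g_{ij}-2(\partial_{\tau}^{m}u)\,\hat g_{ij}=(\text{terms of order}<m),\nonumber
\end{equation}
\begin{equation}
(m-1-n)\partial_{\tau}^{m}u-\tfrac{1}{2}\hat g^{ij}(\partial_{\tau}^{m}g_{ij})=(\text{terms of order}<m),\nonumber
\end{equation}
so the top-order coefficients of $g_\tau$ and of $u$ occur simultaneously in both relations; the system is genuinely coupled at order $m$, not triangular, and the non-vanishing of the diagonal factors does not by itself yield solvability.

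The correct mechanism, which your plan would have to supply, is the following. Taking the $\hat g$-trace of the first relation and writing $T=\hat g^{kl}\partial_\tau^m g_{kl}$, $U=\partial_\tau^m u$, one obtains the $2\times2$ linear system with rows $\bigl(m-2n+1,\,-2(n-1)\bigr)$ and $\bigl(-\tfrac12,\,m-n-1\bigr)$, whose determinant $(m-2n+1)(m-n-1)-(n-1)$ is strictly positive for $1\le m\le n-1$ (both factors are negative and their product is at least $2n$) and vanishes exactly at $m=n$; once $T$ and $U$ are known, the full tensor $\partial_\tau^m g_{ij}$ is recovered from the first relation using $m-n\neq0$. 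This determinant computation is what makes the range $m\le n-1$ sharp --- at $m=n$ only the combination $2\partial_\tau^{n}u-\mathrm{tr}_{\hat g}(\partial_\tau^{n}g_\tau)$ is determined --- and it is precisely the verification your second ``delicate point'' flags but resolves with the wrong expectation. With that step repaired, the rest of your argument (base data $g_\tau|_{\tau=0}=\hat g$, $u|_{\tau=0}=1$, right-hand sides depending only on lower-order coefficients) matches the paper's proof.
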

\begin{proof}
By the assumption, and $g_{\tau}|_{\tau=0}=\hat{g}.$ Since $V$ is with the growth of $\frac{1}{\tau},$ we define a new variable $$u:=\tau V,$$ then $u|_{\tau=0}=1.$\\
We now impose the static Einstein vacuum condition $Ric(g)+ng=V^{-1}\nabla_{g}^{2}V$ and $\Delta_{g}V=nV$ on a triple $(M^n,g,V)$ of the above form.\\
One can decompose the tensor $Ric(g)+ng-V^{-1}\nabla_{g}^{2}V$ into components with respect to the product structure $N^{n-1}\times(0,\delta).$ A straightforward calculation shows that the vanishing of the component with both indices in $N^{n-1}$ is given by
\begin{equation}\label{ricdecompose}
\begin{split}
&R_{ij}(g_{\tau})-\frac{1}{2}g_{ij}^{''}+(\frac{n-2}{2\tau}-\frac{1}{4}g^{kl}g_{kl}^{'})g_{ij}^{'}+\frac{1}{2}g^{kl}g_{ik}^{'}g_{jl}^{'}+(\frac{1}{2\tau}g^{kl}g_{kl}^{'}+\frac{1}{\tau^{2}})g_{ij}\\
&=V^{-1}((\nabla_{g_{\tau}}^{2}V)_{ij}-\frac{V^{'}}{\tau}g_{ij}+\frac{V^{'}}{2}g_{ij}^{'}),
\end{split}
\end{equation}
where $g_{ij}$ denotes the tensor $g_{\tau}$ on $N^{n-1},$ $'$ denotes $\p_{\tau},$ and $R_{ij}(g_{\tau})$ denotes the $Ric$ tensor of $g_{\tau}$ with $\tau$ fixed.\\
Meanwhile, we also rewrite $\Delta_{g}V$ in terms of the metric $g=\tau^{-2}(d\tau^2+g_{\tau}),$ then we get
\begin{equation}\label{laplacianvminusnv}
\tau^{2}(V^{''}+\frac{1}{\tau}V^{'})+\tau^{2}g^{ij}(\nabla_{g}^{2}V)_{ij}=nV,
\end{equation}
where $g_{ij}$ still denotes the tensor $g_{\tau}$ on $N^{n-1}$ and $'$ denotes $\p_{\tau}.$\\
Substituting $u$ for $V,$ (\ref{ricdecompose}) turns to
\begin{equation}\label{ricug}
\begin{split}
&\tau ug_{ij}^{''}+(1-n)ug_{ij}^{'}-ug^{kl}g_{kl}^{'}g_{ij}-\tau ug^{kl}g_{ik}^{'}g_{jl}^{'}+\frac{\tau}{2}ug^{kl}g_{kl}^{'}g_{ij}^{'}\\
&-2\tau uR_{ij}(g_{\tau})+2\tau(\nabla_{g_{\tau}}^{2}u)_{ij}-2u^{'}g_{ij}+\tau u^{'}g_{ij}^{'}=0.\\
\end{split}
\end{equation}
And (\ref{laplacianvminusnv}) turns to
\begin{equation}\label{laplacianug}
\begin{split}
\tau\Delta_{g_{\tau}}u+\tau u^{''}-nu^{'}+\frac{\tau}{2}g^{ij}g_{ij}^{'}u^{'}-\frac{1}{2}g^{ij}g_{ij}^{'}u=0.
\end{split}
\end{equation}
Differentiating (\ref{ricug}) $m-1$ times with respect to $\tau$ and setting $\tau=0$ gives
\begin{equation}\label{expansiongderivative}
(m-n)\partial_{\tau}^{m}g_{ij}-g^{kl}(\partial_{\tau}^{m}g_{kl})g_{ij}-2(\partial_{\tau}^{m}u)g_{ij}=\nonumber
\end{equation}
(terms involving $\partial_{\tau}^{\mu}g_{ij}$ with $\mu<m$ and $\partial_{\tau}^{\nu}u$ with $\nu< m$).\\
Differentiating (\ref{laplacianug}) $m-1$ times with respect to $\tau$ and setting $\tau=0$ gives
\begin{equation}\label{expansionvderivative}
(m-1-n)\partial_{\tau}^{m}u-\frac{1}{2}g^{ij}(\partial_{\tau}^{m}g_{ij})=\nonumber
\end{equation}
(terms involving $\partial_{\tau}^{p}g_{ij}$ with $p<m$ and $\partial_{\tau}^{q}u$ with $q<m$).\\
So long as $m<n,$ we can inductively uniquely determine $\partial_{\tau}^{m}g_{ij}|_{\tau=0}$ and $\partial_{\tau}^{m}u|_{\tau=0}$ at each step if only we have $\hat{g}=g_{\tau}|_{\tau=0}.$ We compute that $$u^{'}|_{\tau=0}=0,~~~g_{ij}^{'}|_{\tau=0}=0,$$ $$u^{''}|_{\tau=0}=\frac{S(\hat{g})}{2(n-1)(n-2)},$$$$g_{ij}^{''}|_{\tau=0}=\frac{1}{2-n}[\frac{S(\hat{g})}{1-n}\hat{g}_{ij}+2Ric_{ij}(\hat{g})].$$
Note that $\partial_{\tau}^{m}V|_{\tau=0}$ is determined by $\partial_{\tau}^{(m+1)}u|_{\tau=0}.$ So we can uniquely determine $\partial_{\tau}^{m}g_{\tau}|_{\tau=0}$ and $\partial_{\tau}^{(m-1)}V|_{\tau=0}$ for any
$m<n$ if only we have $\hat{g}$.\\
When $m=n,$ we can only uniquely determine the term $[2\partial_{\tau}^{n}u-tr_{g_{\tau}}(\partial_{\tau}^{n}g_{\tau})]|_{\tau=0}.$
\end{proof}
\begin{lemm}\label{definingfct}
Suppose that $(M^n,g,V),$ $\hat{g}$ and $\tau$ are as that of Theorem \ref{expansion}. Let $(X^{n+1},h)=(\mathbb{S}^{1}\times M^n, h=V^{2}d\theta^{2}+g),$ then $\tau$ is the special defining function associated with the metric $d\theta^{2}+\hat{g}.$
\end{lemm}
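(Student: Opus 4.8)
The plan is to put $h$ directly into the normal form that characterizes a special defining function and then read off the induced metric on the conformal infinity. Because the special defining function associated with a chosen representative of the conformal class is \emph{unique} and is by definition the one realizing $h=\tau^{-2}(d\tau^2+h_\tau)$ near the boundary, it suffices to exhibit one defining function $\tau$ for which the associated family $h_\tau$ restricts to $d\theta^2+\hat{g}$ at $\tau=0$.

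First I would fix the compactification. Since $\bar{X}^{n+1}=\mathbb{S}^1\times\bar{M}^n$ and $\partial\bar{M}^n=N^{n-1}$, the boundary is $Y^n=\mathbb{S}^1\times N^{n-1}$. Viewing $\tau$ on $X^{n+1}$ through $\tau(\theta,x):=\tau(x)$, i.e.\ extending it to be independent of the $\mathbb{S}^1$-factor, it is positive in the interior, vanishes on $Y^n$, and satisfies $d\tau\neq0$ there because $d\tau$ already does not vanish on $N^{n-1}$ and $d\theta$ is independent of $d\tau$. Hence this extended $\tau$ is a genuine defining function for $\partial X^{n+1}$.

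The central computation uses the growth hypothesis on $V$. As in the proof of Lemma \ref{uniquelymnobigthann}, I set $u:=\tau V$; the assumption that $V$ grows like $\tau^{-1}$ gives $u|_{\tau=0}=1$, so in particular $u>0$ near the boundary. Writing $V=u/\tau$ and substituting the special-defining-function form $g=\tau^{-2}(d\tau^2+g_\tau)$ for $(M^n,g)$, I compute
\[
h = V^2\, d\theta^2 + g = \frac{u^2}{\tau^2}\, d\theta^2 + \tau^{-2}(d\tau^2+g_\tau) = \tau^{-2}\left(d\tau^2 + u^2\, d\theta^2 + g_\tau\right).
\]
Setting $h_\tau:=u^2\, d\theta^2 + g_\tau$, which is a bona fide $1$-parameter family of metrics on $Y^n$ since $u>0$ and $g_\tau$ is a metric on $N^{n-1}$, this is precisely the normal form $h=\tau^{-2}(d\tau^2+h_\tau)$.

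Finally I would evaluate at the boundary: $h_\tau|_{\tau=0}=u^2|_{\tau=0}\, d\theta^2 + g_\tau|_{\tau=0}=d\theta^2+\hat{g}$, using $u|_{\tau=0}=1$ and $g_\tau|_{\tau=0}=\hat{g}$. Thus $\tau$ realizes $h$ in normal form with boundary metric $d\theta^2+\hat{g}$, and by the uniqueness built into the definition of the special defining function, $\tau$ is the special defining function associated with $d\theta^2+\hat{g}$. There is no substantial analytic obstacle here; the computation is direct, and the only points demanding care are the trivial extension of $\tau$ across the $\mathbb{S}^1$-factor (so that it remains a defining function on the product) and the normalization $u|_{\tau=0}=1$, which follow respectively from the product structure of $X^{n+1}$ and from the $V\sim\tau^{-1}$ growth assumption.
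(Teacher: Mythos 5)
Your proof is correct and follows essentially the same route as the paper: both multiply $h$ by $\tau^2$ to exhibit the normal form $\tau^2 h = d\tau^2 + \bigl((\tau V)^2 d\theta^2 + g_\tau\bigr)$ and then evaluate at $\tau=0$ using $(\tau V)|_{\tau=0}=1$ and $g_\tau|_{\tau=0}=\hat{g}$. Your additional remarks on extending $\tau$ trivially across the $\mathbb{S}^1$-factor and on invoking uniqueness of the special defining function are correct housekeeping that the paper leaves implicit.
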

\begin{proof}
By the assumption, we know that
\begin{equation}
g=\tau^{-2}(d\tau^{2}+g_{\tau})\nonumber
\end{equation}
in a neighborhood of conformal infinity $(N^{n-1},[\hat{g}]).$
We find that
\begin{equation}
\tau^{2}h=d\tau^{2}+[(\tau V)^{2}d\theta^{2}+g_{\tau}]\nonumber
\end{equation}
and
\begin{equation}
\hat{h}=\tau^{2}h|_{T(\mathbb{S}^{1}\times N^{n-1})}=[(\tau V)^{2}d\theta^{2}+g_{\tau}]|_{\tau=0}=d\theta^{2}+\hat{g}.\nonumber
\end{equation}
\end{proof}
Now that we have the above lemma, meanwhile we know that in fact the Riemannian manifold $(X^{n+1},h)$ is Einstein and satisfies $Ric(h)=-nh.$ By means of the expansion of an Einstein metric in \cite{G}, we arrive at the proof of the expansion of a static metric.
\begin{proof}[Proof of Theorem \ref{expansion}]
For a static Einstein vacuum $(M^n,g,V)$ satisfying the assumptions of Theorem \ref{expansion}, according to Lemma \ref{uniquelymnobigthann}, we may write
$$g=\hat{g}+g^{(1)}\tau+g^{(2)}\tau^{2}+\cdots+g^{(n-1)}\tau^{n-1}+o(\tau^{n-1}),$$
and
$$u=\tau V=1+u^{(1)}\tau+u^{(2)}\tau^{2}+\cdots+u^{(n-1)}\tau^{n-1}+o(\tau^{n-1}).$$
Since $h$ is an Einstein metric, by the results in \cite{G}, we know that $h_{\tau}$ satisfies the following:\\
When $n$ is odd,
\begin{equation}
h_{\tau}=\hat{h}+h^{(2)}\tau^{2}+(even~powers~of~\tau)+h^{(n-1)}\tau^{n-1}+h^{(n)}\tau^{n}+\cdots;\nonumber
\end{equation}
When $n$ is even,
\begin{equation}
h_{\tau}=\hat{h}+h^{(2)}\tau^{2}+(even~powers~of~\tau)+h^{(n)}\tau^{n}+s\tau^{n}\log\tau+\cdots;\nonumber
\end{equation}
where:\\
\begin{enumerate}
\item $h^{(2i)}$ are determined by $\hat{h}$ for $2i<n;$
\item $h^{(n)}$ is traceless when $n$ is odd;
\item the trace part of $h^{(n)}$ is determined by $\hat{h}$ and $s$ is traceless and determined by $\hat{h};$
\item the traceless part of $h^{(n)}$ is divergence free.
\end{enumerate}
As we have from Lemma \ref{definingfct} that $h_{\tau}=u^{2}d\theta^{2}+g_{\tau},$ we find that
$$h^{(k)}=\sum_{a+b=k,a\geq0,b\geq0}u^{(a)}u^{(b)}d\theta^{2}+g^{(k)}.$$
When $2i+1<n,$ $h^{(2i+1)}=0,$ which implies $$g^{(2i+1)}=\sum_{a+b=2i+1,a\geq0,b\geq0}u^{(a)}u^{(b)}=0.$$
Since we have $u^{(0)}=1,$ using
\begin{equation}
\begin{split}
&\sum_{a+b=2i+1,a\geq0,b\geq0}u^{(a)}u^{(b)}\\
&=\sum_{a+b=2i-1,a\geq0,b\geq0}u^{(a)}u^{(b)}+2u^{(0)}u^{(2i+1)}+2u^{(1)}u^{(2i)}\\
\end{split}\nonumber
\end{equation}
We conclude that $u^{(2i+1)}=0$ for $2i+1<n,$ which means $V^{(2i)}=0$ for $2i<n-1.$\\
Therefore we have
\begin{equation}
g_{\tau}=\hat{g}+g^{(2)}\tau^{2}+(even~powers~of~\tau)+g^{(2l)}\tau^{2l}+\cdots\nonumber
\end{equation}
\begin{equation}
V=\frac{1}{\tau}+V^{(1)}\tau+(odd~powers~of~\tau)+V^{(2l-1)}\tau^{2l-1}+\cdots\nonumber
\end{equation}
where $g^{(2l)}$ and $V^{(2l-1)}$ are uniquely determined by $\hat{g}$ for $2l\leq n-1.$

\end{proof}

\end{document}